\documentclass[a4paper,11pt]{scrartcl}
\usepackage[utf8]{inputenc}
\usepackage{amsmath}
\usepackage{amssymb}
\usepackage{mathtools}
\usepackage[shortlabels]{enumitem}
\usepackage[amsmath,thmmarks,hyperref,amsthm]{ntheorem}
\usepackage{csquotes}
\usepackage[pdftex]{hyperref}

\newtheorem{theorem}{Theorem}
\newtheorem{definition}[theorem]{Definition}
\newtheorem{lemma}[theorem]{Lemma}

\newtheorem{corollary}[theorem]{Corollary}

\theoremstyle{definition}
\newtheorem{remark}[theorem]{Remark}

\newcommand{\qbinom}[3]{\genfrac{[}{]}{0pt}{}{#1}{#2}_{#3}}
\newcommand{\qnumb}[2]{[{#1}]_{#2}}
\newcommand{\PS}{{\cal Q}}
\newcommand{\F}{\mathbb{F}}
\newcommand{\Deltalambda}{\Delta_\lambda}

\DeclareMathOperator{\Aut}{Aut}
\DeclareMathOperator{\Der}{Der}
\DeclareMathOperator{\lcm}{lcm}
\DeclareMathOperator{\PG}{PG}
\DeclareMathOperator{\Res}{Res}
\DeclareMathOperator{\rank}{rk}
\DeclareMathOperator{\Sp}{Sp}
\DeclareMathOperator{\Spb}{\textbf{Sp}}
\DeclareMathOperator{\codim}{codim}

\newcommand{\QWb}{\Spb\mathbf{(2r, q)}}

\newcommand{\QQp}{\Omega^+(2r,q)}
\newcommand{\QQ}{\Omega(2r+1,q)}
\newcommand{\QQm}{\Omega^-(2r+2, q)}

\DeclareMathOperator{\PGSp}{P\Gamma Sp}
\DeclareMathOperator{\PGO}{P\Gamma O}
\DeclareMathOperator{\PGU}{P\Gamma U}

\title{Designs in finite classical polar spaces}
\author{Michael Kiermaier\thanks{Department of Mathematics, University of Bayreuth, 95447 Bayreuth, Germany\\email: \texttt{michael.kiermaier@uni-bayreuth.de}\\homepage:~\url{https://mathe2.uni-bayreuth.de/michaelk/}
}
    \and
    Kai-Uwe Schmidt\thanks{Department of Mathematics, Paderborn University, 33098 Paderborn, Germany}
    \and
    Alfred Wassermann\thanks{Department of Mathematics\\University of Bayreuth, 95447 Bayreuth, Germany\\email: \texttt{alfred.wassermann@uni-bayreuth.de}}%
}

\begin{document}
\maketitle

\begin{abstract}
    Combinatorial designs have been studied for nearly 200 years.  Fifty years
    ago, Cameron, Delsarte, and Ray-Chaudhury started investigating their
    $q$-analogs, also known as subspace designs or designs over finite fields.

    Designs can be defined analogously in finite classical polar spaces, too.
    The definition includes the $m$-regular systems from projective geometry as
    the special case where the blocks are generators of the polar space.  The
    first nontrivial such designs for $t > 1$ were found by De Bruyn and
    Vanhove in 2012, and some more designs appeared recently in the PhD thesis
    of Lansdown.

    In this article, we investigate the theory of classical and subspace
    designs for applicability to designs in polar spaces, explicitly allowing
    arbitrary block dimensions.  In this way, we obtain divisibility conditions
    on the parameters, derived and residual designs, intersection numbers and
    an analog of Fisher's inequality.  We classify the parameters of symmetric
    designs.  Furthermore, we conduct a computer search to construct designs of
    strength $t=2$, resulting in designs for more than $140$ previously unknown
    parameter sets in various classical polar spaces over $\F_2$ and $\F_3$.
   
    \par\vskip\baselineskip\noindent
    \paragraph{Keywords.} Finite classical polar spaces, design theory, m-regular systems, Galois geometries.
\end{abstract}

\section{Introduction}

The classification of non-degenerate sesquilinear and non-singular quadratic forms on vector spaces over a finite field
gives rise to the \emph{finite classical polar spaces}.
Fixing such a form on a finite ambient vector space $V$, the polar space $\mathcal{Q}$ is formed by all subspaces of $V$
which are totally isotropic or totally singular.
The subspaces of maximal dimension are called \emph{generators} and their dimension is called the \emph{rank} of $\mathcal{Q}$.
The type of the form together with the order $q$ of the base field will be called the \emph{type} $Q$ of $\mathcal{Q}$.

A set $\mathcal{S}$ of generators of $\mathcal{Q}$ such that each point of $\mathcal{Q}$ is incident with
exactly $\lambda$ elements of $\mathcal{S}$ is called a \emph{$\lambda$-spread}.
The study of these objects goes back to Segre (1965) \cite{Segre-1965}.
Segre's work already contains a generalization replacing the points by subspaces of arbitrary dimension $t$, known as
a \emph{$\lambda$-regular system with regard to $(t-1)$-spaces}.
This definition clearly is related to the notion of combinatorial designs~\cite{handbook-2006} and
subspace designs~\cite{qdesigns2017}, where the ambient subset (or subspace) lattice has been replaced
by the subspaces of the polar space $\mathcal{Q}$, and it is covered by Delsarte's designs in association schemes,
see Vanhove~\cite[pp. 39, 81, 193]{vanhove2011}.

This naturally leads us to the following full adaption of the notion of a design to polar spaces.
A \emph{$t$-$(r, k, \lambda)_Q$ design} is a set $D$ of $k$-dimensional subspaces of the ambient polar space $\mathcal{Q}$
of type $Q$ such that every $t$-dimensional subspace of $\mathcal{Q}$ is incident with exactly
$\lambda$ elements (called \emph{blocks}) of $D$.
A design with $\lambda = 1$ is called \emph{Steiner system}.
The $\lambda$-regular systems with regard to the $(t-1)$-spaces form the special case $k = r$.

Apart from the elementary cases of trivial designs and designs coming from Latin-Greek halvings, hitherto only very few concrete constructions of designs in polar spaces of strength $t\geq 2$ have been known.
De Bruyn and Vanhove (2012) constructed designs with parameters $2$-$(3,3,2)_{\Omega(3)}$ and $2$-$(3,3,2)_{\Omega^-(2)}$, and showed the non-existence of $2$-$(3,3,2)_{\Sp(3)}$ design~\cite{DeBruyn2021}.
In the sequel, Bamberg and Lansdown \cite{bamberg2018,lansdown2020} constructed $2$-$(3,3,(q+1)/2)_{\Omega(q)}$ designs for $q\in\{5,7,11\}$.
Non-existence results for Steiner systems in polar spaces have been shown in \cite{Schmidt-Weiss-2022}, see Theorem~\ref{thm:schmidtweiss}.
By the recent non-constructive result~\cite{Weiss-2025-DCC93[4]:971-981}, nontrivial $t$-$(r,k,\lambda)_Q$ designs exist for all values of $t$ and all types $Q$, provided that $k > \frac{21}{2} t$ and $r$ is large enough.

In this article, we investigate the theory of classical and subspace designs for their applicability to designs in polar spaces.
After collecting the required preliminaries in Section~\ref{sec:prelim}, Section~\ref{sec:design} states the definition of a design $D$ in a polar space and gives the first basic results, including the supplementary (Lemma~\ref{lem:supplementary}) and the reduced (Lemma~\ref{lem:reduced}) design of $D$.
The latter leads to the notion of admissible parameters.
In Section~\ref{sec:der_res}, we construct derived (Theorem~\ref{thm:derived}) and residual (Theorem~\ref{thm:residual}) designs of $D$.
Equations for the intersection numbers of $D$ are computed in Section~\ref{sec:intersection} (Theorems~\ref{thm:mendelsohn} and~\ref{thm:koehler}) and yield a uniqueness result for the Latin-Greek halvings (Lemma~\ref{lem:latin_greek_unique}) in hyperbolic polar spaces.
In Section~\ref{sec:fisher}, we show that the Gram matrix of the point-block incidence matrix of a design $D$ of strength $t \geq 2$ is related to the adjacency matrix of the collinearity graph of the ambient polar space.
This leads to an analog of Fisher's inequality (Theorem~\ref{thm:fisher}), which in turn yields the classification of all $t$-$(r,r,1)_{\Omega^+(q)}$ Steiner systems of strength $t \geq 2$ (Theorem~\ref{thm:steiner_omegaplus}) by elementary means.
In the computational part in Section~\ref{sec:compute}, we construct designs with $t=2$ for more than $140$ previously unknown parameter sets in various classical polar spaces over $\F_2$ and $\F_3$.
Among these are the first nontrivial $2$-designs where the block dimension $k$ is strictly smaller than the rank $r$ of the ambient polar space.
We conclude the article stating a few open problems and suggestions for further research in Section~\ref{sec:outlook}.

\section{Preliminaries}\label{sec:prelim}
Throughout this article, $q \geq 2$ will denote a prime power and $V$ a vector space over $\F_q$ of finite dimension $v$.
The lattice $\PG(V) \cong \PG(n-1,q)$ of all subspaces of $V$ is a finite projective space of algebraic dimension $n$ and geometric dimension $n-1$.
For subspaces of projective and polar spaces, the word \emph{dimension} will always denote the algebraic dimension, as the resulting formulas tend to
be simpler and closer to the classical theory of block designs.
The set of all $\F_q$-subspaces of $V$ of dimension $k$ will be denoted by $\qbinom{V}{k}{q}$.
Its cardinality is given by the Gaussian binomial coefficient
\[
    \#\qbinom{V}{k}{q}
    = \qbinom{v}{k}{q}
    = \begin{cases}
        \displaystyle\prod_{i=1}^v \frac{q^{v-i+1}-1}{q^i - 1} & \text{for }k\in\{0,\ldots,v\}\text{;} \\
        0                                                      & \text{otherwise.}
    \end{cases}
\]
We will make use of the abbreviation $\qnumb{v}{q} = \qbinom{v}{1}{q}$, which is known as the \emph{$q$-analog of the number $v$}.
The subspaces of algebraic dimension $1$ will be called \emph{points}, of dimension $2$ \emph{lines}, of dimension $3$ \emph{planes} and
dimension $v - 1$ \emph{hyperplanes}.

For a subspace $U$ of $V$ of dimension $u$, $V/U$ is the ambient vector space of a projective space over $\F_q$ of dimension $v - u$.
As a consequence, the number of subspaces of $V$ of dimension $k$ containing $U$ is $\qbinom{v-u}{k-u}{q}$.

For an introduction to finite polar spaces, we refer to \cite{deBeuleKleinMetsch2011,Ball-2015-FiniteGeometry}.
Table~\ref{tbl:ps} shows all classical polar spaces of rank $r \geq 1$ over the finite field $\F_q$ up to isomorphism.
It will be convenient to collect all finite classical polar spaces $\PS$ which only differ (possibly) in their rank $r$ (but not in the type of
the underlying form nor the number $q$) into the \emph{type} $Q$.
The table includes the symbol we will use for the type, as well as a parameter $\epsilon$ which allows a uniform treatment
of all types of polar spaces in counting formulas.
Note that the Hermitian polar spaces only exist for squares $q$.
For even $q$, there is an isomorphism $\Omega(2t+1) \cong \Sp(2r,q)$.
For the readers' convenience, the second last column lists an alternative set of symbols that are sometimes found in the literature
(for example \cite{Schmidt-Weiss-2022}).
Each polar space has a natural embedding into a projective geometry $\PG(V)$.
The column $n$ lists the (algebraic) dimension $\dim(V)$ of this geometry, and column $G$ contains
the collineation group, see e.g. \cite[Section 2.3.5]{BrouwerVanMaldeghem2022}.

\begin{table}[htb]
    \caption{Finite classical polar spaces of rank $r$}
    \label{tbl:ps}
    \centering\begin{tabular}{lllllclll}
        name       & symbol $\PS$        & type $Q$      & $n$    & $\epsilon$ & \multicolumn{2}{l}{alternative symbols} & $G$                                     \\ \hline
        symplectic & $\Sp(2r, q)$        & $\Sp(q)$      & $2r$   & $0$        & $C_r$                                   & $W_{2r-1}(q)$   & $\PGSp_{2r}(q)$       \\
        Hermitian  & $U(2r, q)$          & $U^+(q)$      & $2r$   & $-1/2$     & ${}^2A_{2r-1}$                          & $H_{2r-1}(q)$   & $\PGU_{2r}(\sqrt{q})$ \\
        Hermitian  & $U(2r+1, q)$        & $U^-(q)$      & $2r+1$ & $+1/2$     & ${}^2A_{2r}$                            & $H_{2r}(q)$     & $\PGU_{2r+1}(\sqrt{q})$ \\
        hyperbolic & $\Omega^+(2r,q)$    & $\Omega^+(q)$ & $2r$   & $-1$       & $D_r$                                   & $Q^+_{2r-1}(q)$ & $\PGO^+_{2r}(q)$      \\
        parabolic  & $\Omega(2r+1,q)$    & $\Omega(q)$   & $2r+1$ & $0$        & $B_r$                                   & $Q_{2r}(q)$     & $\PGO_{2r+1}(q)$      \\
        elliptic   & $\Omega^-(2r+2, q)$ & $\Omega^-(q)$ & $2r+2$ & $+1$       & ${}^2D_{r+1}$                           & $Q^-_{2r+1}(q)$ & $\PGO^-_{2r+2}(q)$    \\
    \end{tabular}
\end{table}

In the following, $\PS$ will always denote a finite classical polar space over $\F_q$ of rank $r \geq 1$,
$Q$ will denote its type and $\epsilon$ will be its parameter as listed in Table~\ref{tbl:ps}.
Corresponding to the notation for vector spaces, the set of all subspaces of $\PS$ of dimension $k$ will be denoted by $\qbinom{\PS}{k}{Q}$, and its cardinality
will be denoted by $\qbinom{r}{k}{Q} \coloneqq \#\qbinom{\PS}{k}{Q}$.
Two distinct points $P,P'\in\qbinom{\PS}{1}{Q}$ are called \emph{collinear} if the line $\langle P,P'\rangle$ is contained in $\mathcal{Q}$.
For a subspace $U$ of the polar space $\PS$, the symbol $U^\perp$ denotes the orthogonal subspace of $U$ in $V$ with respect to the underlying form.
Note that $\dim(U) + \dim(U^\perp) = n$, and for all subspaces $U$ of $\mathcal{Q}$ we have $U \subseteq U^\perp$.

A hyperplane $H$ of the ambient vector space $V$ is either \emph{degenerate} or \emph{non-degenerate} with respect to the underlying form.
The degenerate hyperplanes are exactly the ones of the form $H = P^\perp$ with $P\in\qbinom{\PS}{1}{Q}$.
Moreover, all subspaces of $\PS$ contained in $H$ pass through the point $P$, and after modding out $P$, the resulting set of spaces is a polar space of the same
type $Q$ and rank one less.
For non-degenerate hyperplanes $H$, the restriction of $\PS$ to $H$ is again a polar space.

We need some basic counting formulas in the subspace poset of finite classical polar spaces.
A useful observation is that for a given subspace $U$ of dimension $u$ in the poset of all subspaces of the finite polar space $\PS$, the interval below $U$ is
the finite projective space $\PG(U) \cong \PG(u-1,q)$, and the interval above $U$ is the finite polar space $U^\perp / U$, which is of the same type $Q$ and of
rank $r - u$.
Therefore, counting subspaces below $U$ just resembles counting in finite projective spaces as discussed above.
For counting subspaces above $U$, we prepare the following lemma.

\begin{lemma}
    \label{lem:count}
    \item
    \begin{enumerate}[(a)]
        \item\label{lem:count:subspace}
        The number of $k$-dimensional subspaces of $\PS$ is equal to
        \[
            \qbinom{r}{k}{Q} = \qbinom{r}{k}{q}\cdot \prod_{i=r-k+1}^r (q^{i+\epsilon} + 1)\text{.}
        \]
        \item\label{lem:count:intermediate}
        The number of $k$-dimensional subspaces of $\PS$ containing a fixed $u$-dimensional subspace is
        \[
            \qbinom{r-u}{k-u}{Q} = \qbinom{r-u}{k-u}{q} \cdot \prod_{i=r-k+1}^{r-u}(q^{i+\epsilon} + 1)\text{.}
        \]
    \end{enumerate}
\end{lemma}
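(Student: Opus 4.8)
The plan is to derive both statements from a single classical input together with two structural facts recalled just before the lemma. The input is that a finite classical polar space of type $Q$ and rank $\rho$ has exactly $\qnumb{\rho}{q}\,(q^{\rho+\epsilon}+1)$ points; this is the case $k=1$, $u=0$ of the lemma, and it follows either from a direct count in each of the six types in Table~\ref{tbl:ps} or from \cite{deBeuleKleinMetsch2011,Ball-2015-FiniteGeometry}. The structural facts are that the interval below a $u$-dimensional subspace of $\PS$ is the projective space $\PG(u-1,q)$, and that the interval above it is a polar space of the same type $Q$ and rank $r-u$.

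First I would prove \ref{lem:count:subspace} by induction on $k$ for $0\le k\le r$, the case $k=0$ being trivial as both sides equal $1$. For the inductive step, count incident pairs $(U',W)$ with $U'\in\qbinom{\PS}{k-1}{Q}$, $W\in\qbinom{\PS}{k}{Q}$ and $U'\subset W$ in two ways. Fixing $W$ first, its subspaces form $\PG(k-1,q)$, so it contains $\qbinom{k}{k-1}{q}=\qnumb{k}{q}$ subspaces of dimension $k-1$. Fixing $U'$ first, the $k$-subspaces $W\supseteq U'$ correspond to the points of the polar space $U'^\perp/U'$, which has type $Q$ and rank $r-k+1$; hence there are $\qnumb{r-k+1}{q}\,(q^{r-k+1+\epsilon}+1)$ of them, and this number does not depend on $U'$. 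Comparing the two counts gives
\[
    \qbinom{r}{k}{Q} = \qbinom{r}{k-1}{Q}\cdot\frac{\qnumb{r-k+1}{q}}{\qnumb{k}{q}}\,(q^{r-k+1+\epsilon}+1),
\]
and telescoping this down to $\qbinom{r}{0}{Q}=1$ yields $\qbinom{r}{k}{Q}=\qbinom{r}{k}{q}\cdot\prod_{i=r-k+1}^{r}(q^{i+\epsilon}+1)$, using that $\prod_{j=1}^{k}\qnumb{r-j+1}{q}/\qnumb{j}{q}=\qbinom{r}{k}{q}$ and $\prod_{j=1}^{k}(q^{r-j+1+\epsilon}+1)=\prod_{i=r-k+1}^{r}(q^{i+\epsilon}+1)$.

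Statement \ref{lem:count:intermediate} then follows at once: the $k$-subspaces of $\PS$ through a fixed $u$-dimensional subspace $U$ are precisely the $(k-u)$-subspaces of the polar space $U^\perp/U$ of type $Q$ and rank $r-u$, so by \ref{lem:count:subspace} their number is $\qbinom{r-u}{k-u}{q}\cdot\prod_{i=(r-u)-(k-u)+1}^{r-u}(q^{i+\epsilon}+1)$, and reindexing via $(r-u)-(k-u)+1=r-k+1$ gives the asserted formula.

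The only ingredient that is not pure bookkeeping with Gaussian binomials and the interval structure is the point count of a polar space, which is where the six types genuinely enter; everything else is formal. The one point to be careful about in the induction is precisely that the number of $k$-subspaces containing a given $(k-1)$-subspace $U'$ is independent of $U'$ — this is exactly what is guaranteed by the fact that $U'^\perp/U'$ is, up to isomorphism, always the polar space of type $Q$ and rank $r-k+1$, so no homogeneity argument beyond the structural statement already recorded is needed.
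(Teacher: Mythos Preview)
Your argument is correct. For part~\ref{lem:count:intermediate} it coincides exactly with the paper's: both pass to the quotient polar space $U^\perp/U$ of type~$Q$ and rank $r-u$ and apply part~\ref{lem:count:subspace} there. For part~\ref{lem:count:subspace} the paper simply cites \cite[Lemma~9.4.1]{brouwer1989distance}, whereas you supply a self-contained inductive proof via the double count of flags $U'\subset W$ with $\dim U'=k-1$, $\dim W=k$. Your route has the advantage of making explicit that the only genuine input from the classification is the point count $\qnumb{\rho}{q}(q^{\rho+\epsilon}+1)$ of a polar space of type~$Q$ and arbitrary rank~$\rho$; everything else is the interval structure already recorded before the lemma plus Gaussian-binomial bookkeeping. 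The paper's citation is more economical but hides this. Either way the content is the same classical formula, and your derivation is a standard and perfectly acceptable way to obtain it.
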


\begin{proof}
    Part~\ref{lem:count:subspace} is found in \cite[Lemma 9.4.1]{brouwer1989distance}.
    For an $u$-dimensional subspace $U$, Part~\ref{lem:count:intermediate} follows from an application of Part~\ref{lem:count:subspace} to the quotient
    polar space $U^\perp / U$, which is of the same type $Q$ and of rank $u$ less compared to the original polar space $\PS$.
\end{proof}

Again, we define the abbreviation $\qnumb{r}{Q} = \qbinom{r}{1}{Q} = \qnumb{r}{q} \cdot (q^{r + \epsilon} + 1)$.
Table~\ref{tab:hyper} lists the polar spaces $\PS'$ arising from non-degenerate hyperplanes of the ambient vector space of $\PS$, together with the
rank $r'$ and the parameter $\epsilon'$ of $\PS'$.
Note that in all cases, $r - r' \in \{0,1\}$ and $\epsilon' = \epsilon + 2(r-r') - 1$.
We remark that the symplectic space $\PS = \Sp(2r,q)$ does not show up in the table, since its point set $\qbinom{\PS}{1}{Q}$ equals the full point set of $\PG(V)$
(as all vectors are isotropic) and therefore, all hyperplanes are degenerate.

\begin{table}[htb]
\caption{Polar spaces $\PS'$ arising from non-degenerate hyperplanes of $\PS$.}\label{tab:hyper}
\[
    \begin{array}{lllll}
        \PS              & \epsilon & \PS'                   & r'  & \epsilon' \\
        \hline
        U(2r+1,q)        & +1/2     & U(2r,q)                & r   & -1/2      \\
        U(2r,q)          & -1/2     & U(2(r-1)+1,q)          & r-1 & +1/2      \\
        \Omega^+(2r,q)   & -1       & \Omega(2(r-1)+1,q)     & r-1 & 0         \\
        \Omega(2r+1,q)   & 0        & \Omega^+(2r,q)         & r   & -1        \\
        \Omega(2r+1,q)   & -1/2     & \Omega^-(2(r-1) + 2,q) & r-1 & +1/2      \\
        \Omega^-(2r+2,q) & +1       & \Omega(2r+1,q)         & r   & 0
    \end{array}
\]
\end{table}

The \emph{collinearity graph} of a polar space $\PS$ has the points of $\PS$ as vertex set, with two of them being adjacent if they are collinear.
This graph is known to be strongly regular with the parameters $(v,k,\lambda,\mu)$ where \cite{Kantor1982}
\begin{align*}
    v       & = \qbinom{r}{1}{Q}\text{,}                   &
    k       & = q\cdot \qbinom{r-1}{1}{Q}\text{,}          &
    \lambda & = q - 1 + q^2\cdot\qbinom{r-2}{1}{Q}\text{,} &
    \mu     & = \qbinom{r-1}{1}{Q}\text{.}
\end{align*}
From these parameters, the eigenvalues can be computed as $\theta_0 = k$ with multiplicity $m_0=1$ and
the two zeroes $\theta_1$, $\theta_2$ of the quadratic equation $\theta^2 - (\lambda - \mu)\theta - (k - \mu) = 0$.
The remaining multiplicities $m_1$ and $m_2$ are then determined by the system of linear equations $\theta_1 m_1 + \theta_2 m_2 = -1$ and $m_1 + m_2 = v - 1$.
This leads \cite[Theorem~2.2.12]{BrouwerVanMaldeghem2022} to the eigenvalues
\begin{align*}
    \theta_0 & = k\text{,}                       &
    \theta_1 & = q^{r-1} - 1\text{,}             &
    \theta_2 & = -(q^{r+\epsilon-1} + 1)\text{,}
\end{align*}
with multiplicities
\begin{align*}
    m_0 & = 1\text{,}                                                                                            \\
    m_1 & = q^{\epsilon + 1}\cdot\frac{q^{r+\epsilon-1} + 1}{q^\epsilon + 1}\cdot\qbinom{r}{1}{q}\quad\text{and} \\
    m_2 & = q \cdot\frac{q^{r+\epsilon} + 1}{q^\epsilon + 1}\cdot\qbinom{r-1}{1}{q}\text{.}
\end{align*}

\section{Designs in polar spaces}\label{sec:design}
Since the early 1970s, the notion of combinatorial block designs has been
generalized to \emph{subspace designs}, which are designs in finite projective spaces and
can be understood as a $q$-analog of the classical situation.
Many results in classical block designs turned out to have a generalization for subspace designs.
We refer the reader to \cite{handbook-2006} for a comprehensive treatment of combinatorial designs
and \cite{qdesigns2017} for an introduction and an overview to subspace designs.

The theory of subspace designs will serve as a blueprint for our investigation of designs in finite classical polar spaces.
As we will see, parts of the basic theory can also be applied in this situation, while others apparently do not have a direct adaption.
Again, $\PS$ will denote a finite classical polar space over $\F_q$ of rank $r \geq 1$, $Q$ will denote its type and $\epsilon$ will is its parameter
as listed in Table~\ref{tbl:ps}.

\begin{definition}
    Let $t,k\in\{0,\ldots,r\}$ and $\lambda$ be a non-negative integer.
    A set $D$ of subspaces of $\PS$ of (algebraic) dimension $k$ is called a \emph{$t$-$(r,k,\lambda)_Q$ design}
    (or a \emph{$t$-$(r,k,\lambda)$ design in $\PS$})
    if every subspace $T$ of $\PS$ of (algebraic) dimension $t$ is contained in exactly $\lambda$ elements of $D$.

    The elements of $D$ are called \emph{blocks} and the number $t$ is called the \emph{strength} of the design $D$.
    In the case $\lambda = 1$, $D$ is called a \emph{Steiner system}.
\end{definition}
We remark that our definition doesn't allow repeated blocks, so in design theory terminology, all designs are considered to be \emph{simple}.
Steiner systems with $t = 1$ are called \emph{spreads} and are well-studied objects, compare \cite{deBeuleKleinMetsch2011}.

In finite geometry, designs with parameters $t$-$(r,r,\lambda)_Q$, i.\,e. the dimensions of the blocks equal
the rank of the polar space, were already studied by Segre \cite{Segre-1965} under the name
\emph{$\lambda$-regular systems with respect to $(t-1)$-spaces}.
$\lambda$-regular systems with respect to points are called \emph{$\lambda$-spreads}, see \cite{cossidente2021regular} and \cite[Sec.~2.2.7]{BrouwerVanMaldeghem2022}.

Of course, the \emph{empty design} $D = \emptyset \subseteq \qbinom{\mathcal{Q}}{k}{Q}$ is a $t$-$(r,k,0)_q$ design for all values $t\in\{0,\ldots,k\}$.

\medskip
The following two lemmas are a direct consequence of Lemma~\ref{lem:count}\ref{lem:count:intermediate}.

\begin{lemma}
    The \emph{complete design} consisting of all $k$-dimensional subspaces of $\PS$ is a $t$-$(r,k,\lambda_{\max})_{Q}$ design with
    \[
        \lambda_{\max} = \qbinom{r-t}{k-t}{Q} = \qbinom{r-t}{k-t}{q} \cdot \prod_{i=r-k+1}^{r-t}(q^{i+\epsilon} + 1)\text{.}
    \]
\end{lemma}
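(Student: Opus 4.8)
The plan is to verify that the complete design has the claimed value of $\lambda$ by a direct count, and to observe that this count is independent of which $t$-dimensional subspace $T$ we start from. Fix a $t$-dimensional subspace $T$ of $\PS$. A block of the complete design containing $T$ is precisely a $k$-dimensional subspace of $\PS$ containing $T$, so $\lambda_{\max}$ is the number of such subspaces. By the observation recorded just before Lemma~\ref{lem:count} — namely that the interval above $T$ in the subspace poset of $\PS$ is the polar space $T^\perp / T$ of the same type $Q$ and rank $r - t$ — this count equals $\qbinom{r-t}{k-t}{Q}$, the total number of $(k-t)$-dimensional subspaces of a polar space of type $Q$ and rank $r-t$.

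First I would apply Lemma~\ref{lem:count}\ref{lem:count:intermediate} directly: taking $u = t$ there gives that the number of $k$-dimensional subspaces of $\PS$ containing a fixed $t$-dimensional subspace is
\[
    \qbinom{r-t}{k-t}{Q} = \qbinom{r-t}{k-t}{q} \cdot \prod_{i=r-k+1}^{r-t}(q^{i+\epsilon} + 1)\text{,}
\]
which is exactly the asserted formula for $\lambda_{\max}$. Crucially, the right-hand side depends only on $r$, $t$, $k$, $q$ and $\epsilon$, and not on the chosen subspace $T$; hence every $t$-dimensional subspace of $\PS$ lies in the same number $\lambda_{\max}$ of blocks. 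This is precisely the defining property of a $t$-$(r,k,\lambda_{\max})_Q$ design, so the claim follows.

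There is essentially no obstacle here: the statement is an immediate corollary of Lemma~\ref{lem:count}\ref{lem:count:intermediate}, which the paper has already established, and the only point requiring a word is the homogeneity of the count over all $t$-subspaces, which is built into the product formula since its exponent range and parameters do not reference $T$. One should also note the trivial boundary behaviour: when $t = k$ the empty product equals $1$ and $\qbinom{0}{0}{q} = 1$, so $\lambda_{\max} = 1$, consistent with the complete design of $k$-subspaces being a Steiner system of strength $k$; and the constraint $t \leq k$ is what makes the index set $\{r-k+1,\ldots,r-t\}$ and the $q$-binomial coefficient meaningful.
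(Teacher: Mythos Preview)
Your proof is correct and matches the paper's approach: the paper states that this lemma is a direct consequence of Lemma~\ref{lem:count}\ref{lem:count:intermediate}, which is exactly what you invoke with $u = t$. Your added remarks on homogeneity and boundary behaviour are fine but not needed.
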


\begin{lemma}\label{lem:supplementary}
    Let $D$ be a $t$-$(r,k,\lambda)_{Q}$ design.
    The \emph{supplementary} design $D^\complement = \qbinom{\PS}{k}{Q}\setminus D$ is a design with the parameters $t$-$(r,k,\qbinom{r-t}{k-t}{Q} - \lambda)_{Q}$.
\end{lemma}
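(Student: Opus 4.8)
The plan is to show that the supplementary design $D^\complement$ inherits the defining property of a design directly from $D$ by a simple counting argument. First I would fix an arbitrary $t$-dimensional subspace $T$ of $\PS$ and count, in two ways, the number of $k$-dimensional subspaces of $\PS$ that contain $T$. On the one hand, by Lemma~\ref{lem:count}\ref{lem:count:intermediate}, this number equals $\qbinom{r-t}{k-t}{Q}$, independently of the choice of $T$. On the other hand, every such $k$-subspace either lies in $D$ or in its complement $D^\complement$, and these two possibilities are mutually exclusive and exhaustive within $\qbinom{\PS}{k}{Q}$.

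Next I would invoke the hypothesis that $D$ is a $t$-$(r,k,\lambda)_Q$ design: exactly $\lambda$ of the $k$-subspaces through $T$ belong to $D$. Subtracting, the number of $k$-subspaces through $T$ that belong to $D^\complement = \qbinom{\PS}{k}{Q}\setminus D$ equals $\qbinom{r-t}{k-t}{Q} - \lambda$. Since $T$ was arbitrary, this count is the same for every $t$-dimensional subspace of $\PS$, which is precisely the statement that $D^\complement$ is a $t$-$(r,k,\qbinom{r-t}{k-t}{Q}-\lambda)_Q$ design. One should also note in passing that this value is a non-negative integer, as required in the definition: it is non-negative because $\lambda \leq \lambda_{\max} = \qbinom{r-t}{k-t}{Q}$ (no $t$-subspace can be contained in more blocks than there are $k$-subspaces through it), and it is clearly an integer.

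There is essentially no obstacle here; the only thing to be careful about is bookkeeping with the dimension parameters, namely that the complement is taken inside $\qbinom{\PS}{k}{Q}$ rather than inside some larger ambient set, so that the partition ``block of $D$ or block of $D^\complement$'' is exactly the partition used. The whole argument is the polar-space analog of the classical fact that the complement of a $t$-design is again a $t$-design, and it follows formally from Lemma~\ref{lem:count}\ref{lem:count:intermediate} together with the additivity of counting over a disjoint union.
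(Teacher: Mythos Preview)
Your argument is correct and matches the paper's approach: the paper simply states that the lemma is a direct consequence of Lemma~\ref{lem:count}\ref{lem:count:intermediate}, and what you have written is precisely that consequence spelled out in detail.
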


By the last lemma, only the range $\lambda\in\{1,\ldots, \lfloor\lambda_{\max} / 2\rfloor\}$ is relevant for the investigation of designs.
Designs $D$ with $\lambda = \lambda_{\max}/2$ are called \emph{halvings} as they split $\qbinom{\PS}{k}{Q}$ into
two \enquote{halves} $D$ and $D^\complement$ which both are designs with the same parameters.
In the case $r = k$ halvings are also called \emph{hemisystems} \cite{cossidente2021regular}.

The empty and the complete design are supplementary to each other.
They are called the \emph{trivial} designs.

\begin{lemma}
    \label{lem:reduced}
    Let $D$ be a $t$-$(r,k,\lambda)_{Q}$ design.
    Then for each $s\in\{0,\ldots,t\}$, $D$ is an $s$-$(r,k,\lambda_s)_{Q}$ design with
    \[
        \lambda_s
        \coloneqq \lambda\cdot\dfrac{\qbinom{r-s}{t-s}{Q}}{\qbinom{k-s}{t-s}{q}}
        = \lambda\cdot\dfrac{\qbinom{r-s}{t-s}{q}}{\qbinom{k-s}{t-s}{q}} \cdot\prod_{i=r-t+1}^{r-s} (q^{i + \epsilon} + 1) \text{.}
    \]
    In particular, the number of blocks of $D$ is given by $\#D = \lambda_0$.
\end{lemma}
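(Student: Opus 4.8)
The plan is to carry out a standard double-counting argument on incident pairs $(S, B)$ where $S$ is an $s$-dimensional subspace of $\PS$ contained in a fixed $t$-dimensional subspace, and $B$ is a block of $D$ containing $S$. Fix an $s$-dimensional subspace $S$ of $\PS$. I want to count, in two ways, the number $N$ of pairs $(T, B)$ where $T$ is a $t$-dimensional subspace of $\PS$ with $S \subseteq T$, and $B \in D$ with $T \subseteq B$. On the one hand, for each such $T$ there are exactly $\lambda$ blocks $B \in D$ with $T \subseteq B$, since $D$ has strength $t$; and the number of $t$-dimensional subspaces $T$ of $\PS$ with $S \subseteq T$ is $\qbinom{r-s}{t-s}{Q}$ by Lemma~\ref{lem:count}\ref{lem:count:intermediate}, applied to the quotient polar space $S^\perp/S$. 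Hence $N = \lambda \cdot \qbinom{r-s}{t-s}{Q}$.

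On the other hand, I count $N$ by first choosing the block $B$. A priori I do not yet know how many blocks contain $S$ — call this number $\lambda_s(S)$, which is exactly what I am trying to show is a constant $\lambda_s$. For each block $B \in D$ with $S \subseteq B$, the number of $t$-dimensional subspaces $T$ with $S \subseteq T \subseteq B$ is the number of $(t-s)$-dimensional subspaces of the quotient $B/S$, which is a projective space of algebraic dimension $k - s$; this count is $\qbinom{k-s}{t-s}{q}$, independent of $B$. Therefore $N = \lambda_s(S) \cdot \qbinom{k-s}{t-s}{q}$. Equating the two expressions gives
\[
    \lambda_s(S) = \lambda \cdot \frac{\qbinom{r-s}{t-s}{Q}}{\qbinom{k-s}{t-s}{q}},
\]
which in particular does not depend on the choice of $S$, so $D$ is an $s$-$(r,k,\lambda_s)_Q$ design with $\lambda_s$ as claimed. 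The second displayed formula then follows by substituting the product expansion of $\qbinom{r-s}{t-s}{Q}$ from Lemma~\ref{lem:count}\ref{lem:count:subspace} (applied in the quotient polar space, giving the factor $\qbinom{r-s}{t-s}{q} \cdot \prod_{i=(r-s)-(t-s)+1}^{r-s}(q^{i+\epsilon}+1) = \qbinom{r-s}{t-s}{q} \cdot \prod_{i=r-t+1}^{r-s}(q^{i+\epsilon}+1)$) and cancelling. Taking $s = 0$ gives $\#D = \lambda_0$, since $D$ is a $0$-design whose unique "$0$-dimensional subspace" (the zero space) is contained in every block.

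There is no real obstacle here; the only point requiring a little care is bookkeeping of the two index ranges in the product formulas, i.e. verifying that the product over $i$ from Lemma~\ref{lem:count}\ref{lem:count:subspace} applied to the rank-$(r-s)$ quotient polar space, namely $\prod_{i=(r-s)-(t-s)+1}^{r-s}$, indeed has lower limit $r-t+1$ so that it matches the stated formula. One should also note that the denominator $\qbinom{k-s}{t-s}{q}$ is nonzero (as $s \le t \le k$), so the division is legitimate and $\lambda_s$ is the well-defined integer it must be.
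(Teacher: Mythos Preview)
Your proof is correct and follows essentially the same double-counting argument as the paper: fix an $s$-subspace $S$, count pairs $(T,B)$ with $S\subseteq T\subseteq B$ once via the number $\qbinom{r-s}{t-s}{Q}$ of $t$-subspaces above $S$ in $\PS$ times $\lambda$, and once via $\lambda_s(S)\cdot\qbinom{k-s}{t-s}{q}$, then solve for $\lambda_s(S)$. Your additional remarks (checking the product index range and that the denominator is nonzero) are not in the paper's proof but are harmless clarifications.
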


\begin{proof}
    Let $s\in\{0,\ldots,t\}$, $S\in\qbinom{\PS}{t}{Q}$ and $\lambda_S$ the number of blocks of $D$ containing $S$.
    Double counting the set $X$ of all pairs $(T,B)\in\qbinom{\PS}{t}{Q}\times D$ with $S \subseteq T \subseteq B$ gives
    \[
        \#X = \qbinom{r-s}{t-s}{Q} \cdot \lambda
        \qquad\text{and}\qquad
        \#X = \lambda_S \cdot \qbinom{k-s}{t-s}{q}\text{.}
    \]
    Therefore
    \[
        \lambda_S = \qbinom{r-s}{t-s}{Q} / \qbinom{k-s}{t-s}{q} \cdot \lambda
    \]
    does not depend on the choice of $S$.
\end{proof}

For $s = t-1$, the resulting design of Lemma~\ref{lem:reduced} is called the \emph{reduced} design of $D$.
The number $\lambda_1$ is called the \emph{replication number} of $D$.
We remark that $\lambda_t = \lambda$.
For a $t$-$(r,k,\lambda)_{Q}$ design, the numbers $\lambda_0, \ldots,\lambda_t$ clearly must be integers.
If these \emph{integrality conditions} are satisfied, the parameters $t$-$(r,k,\lambda)_{Q}$ will be called \emph{admissible}.
If for admissible parameters a design exists, the parameters are called \emph{realizable}.

For fixed $Q$, $r$, $k$ and $t$, the set of the admissible parameters will have the form $t$-$(r,k,\lambda)_Q$ with
$\lambda\in\{0,\Deltalambda,2\Deltalambda,\ldots,\lambda_{\max}\}$, where
\[
    \Deltalambda = \lcm \left\{\qbinom{k-s}{t-s}{q} / \gcd\Big(\qbinom{r-s}{t-s}{Q}, \qbinom{k-s}{t-s}{q}\Big) \mid s\in\{0,\ldots,t\}\right\}\text{.}
\]
In the important case $k = r$ we always get $\Deltalambda = 1$, implying that the integrality conditions do not give any further restrictions on the parameters.

\section{Derived and residual designs}\label{sec:der_res}
Now we investigate the restriction of designs to hyperplanes $H$.
Because of \cite{Kiermaier-Laue-2015}, resulting constructions can be seen as a polar space analog of the derived (Theorem~\ref{thm:derived})
and the residual (Theorem~\ref{thm:residual}) design.

We start with the easier case of a degenerate hyperplane $H$.
Here $H = P^\perp$ with $P\in\qbinom{\PS}{1}{Q}$ and we want to mod out $P$ to get a polar space.
\begin{theorem}\label{thm:derived}
    Let $D$ be a $t$-$(r,k,\lambda)_{Q}$ design of strength $t \geq 1$ and let $P$ be a point in $\PS$.

    Then
    \[
	    \Der_P(D) = \{B / P \in D\mid P \subseteq B\}
    \]
    is a $(t-1)$-$(r-1,k-1,\lambda)_Q$ design in the ambient polar space $P^\perp / P$.
    It is called the \emph{derived design} of $D$ in $P$.
\end{theorem}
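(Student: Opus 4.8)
The plan is to transport the design property of $D$ along the natural correspondence between the subspace poset of the quotient polar space $\bar{\PS} \coloneqq P^\perp/P$ and the interval above $P$ in the subspace poset of $\PS$.

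First I would invoke the structural facts collected in Section~\ref{sec:prelim}: for $P \in \qbinom{\PS}{1}{Q}$, the interval above $P$ is the polar space $\bar{\PS} = P^\perp/P$, which is of type $Q$ and rank $r-1$, so that $t-1$ and $k-1$ lie in $\{0,\ldots,r-1\}$ whenever $t\geq 1$. The map $W \mapsto W/P$ is an inclusion-preserving bijection from $\{W\in\qbinom{\PS}{j+1}{Q} : P\subseteq W\}$ onto $\qbinom{\bar{\PS}}{j}{Q}$ for every $j\geq 0$; here it is worth noting that the side condition ``$W\subseteq P^\perp$'' is automatic, since a subspace $W$ of $\PS$ containing $P$ is totally isotropic and hence $P\subseteq W\subseteq W^\perp\subseteq P^\perp$. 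Taking $j=k-1$ shows that the elements of $\Der_P(D)$ are genuinely $(k-1)$-dimensional subspaces of $\bar{\PS}$, and taking $j=t-1$ identifies $\qbinom{\bar{\PS}}{t-1}{Q}$ with the set of $T\in\qbinom{\PS}{t}{Q}$ satisfying $P\subseteq T$.

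The remaining step is a one-line count. Fix $\bar{T}\in\qbinom{\bar{\PS}}{t-1}{Q}$ and let $T\in\qbinom{\PS}{t}{Q}$ with $P\subseteq T$ be its preimage. Under the bijection, a block $B/P\in\Der_P(D)$ contains $\bar{T}$ if and only if $T\subseteq B$; and any block $B\in D$ with $T\subseteq B$ automatically has $P\subseteq T\subseteq B$, hence already belongs to $\Der_P(D)$. Therefore the number of blocks of $\Der_P(D)$ through $\bar{T}$ equals the number of blocks of $D$ through $T$, which is $\lambda$ because $D$ has strength $t$ and $\dim T = t$. Since this number does not depend on $\bar{T}$, the set $\Der_P(D)$ is a $(t-1)$-$(r-1,k-1,\lambda)_Q$ design.

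I do not anticipate a real obstacle; the only care needed is in checking that $W\mapsto W/P$ really restricts to an incidence-preserving bijection on subspaces \emph{of the polar space} (not merely of the surrounding projective space), which is precisely the ``interval above $P$'' description recalled above. Degenerate cases are harmless: if $\lambda=0$ then $D=\Der_P(D)=\emptyset$, and if more generally no block of $D$ passes through $P$ then $\Der_P(D)=\emptyset$ is the empty $(t-1)$-$(r-1,k-1,0)_Q$ design, in agreement with the count (and, by Lemma~\ref{lem:reduced} applied with $s=1$, this second situation can occur only when $\lambda=0$).
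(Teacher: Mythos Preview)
Your argument is correct and follows essentially the same route as the paper: identify $P^\perp/P$ as a polar space of the same type and rank $r-1$, use the inclusion-preserving bijection $W\mapsto W/P$ between subspaces of $\PS$ through $P$ and subspaces of $P^\perp/P$, and then observe that blocks of $\Der_P(D)$ through $\bar T$ correspond bijectively to blocks of $D$ through its preimage $T$, of which there are exactly $\lambda$. Your additional remarks (that $W\subseteq P^\perp$ is automatic and the treatment of the degenerate $\lambda=0$ case) are sound but not needed for the core argument.
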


\begin{proof}
    By the theory of polar spaces, $\Der_P(D)$ is contained in $\PS' = P^\perp / P$, which is a polar space of rank $r-1$ and the same type $Q$ as $\PS$.
    Let $T'\in\qbinom{\PS'}{t-1}{Q}$.
    Then $T' = T/P$ with $T\in\qbinom{\PS}{t}{Q}$ and $P \subseteq T$.
    By the representation $B' = B / P$ with $P \subseteq B$, the elements $B' \in \Der_P(D)$ with $T' \subseteq B'$ are in bijective
    correspondence with the blocks $B\in D$ with $T\subseteq B$.
    As $D$ is a $t$-design, their number equals $\lambda$.
\end{proof}

It remains the case of non-degenerate hyperplanes $H$.

\begin{theorem}\label{thm:residual}
    Let $D$ be a $t$-$(r,k,\lambda)_Q$ design and let $H$ be a non-degenerate hyperplane of $V$.
    The restriction of $\PS$ to $H$ is a polar space $\PS'$ of some rank $r'\in\{r,r-1\}$ and some type $Q'$.

    Then $\Res_H(D) = \{B\in D \mid B \subseteq H\}$ is a $(t-1)$-$(r', k, \lambda')_{Q'}$-design with
    \[
        \lambda'
        = \begin{cases}
            \dfrac{\qnumb{r-t+1}{q}\, (q^{r-k+\epsilon} + 1)}{\qnumb{k-t+1}{q}}\;\lambda & \text{if }r' = r\text{;}   \\[5mm]
            \dfrac{(q^{r-t+1+\epsilon} + 1)\, \qnumb{r-k}{q}}{\qnumb{k-t+1}{q}}\;\lambda & \text{if }r' = r-1\text{.}
        \end{cases}
    \]
    It is called the \emph{residual design} of $D$ in $H$.
\end{theorem}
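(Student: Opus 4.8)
The plan is to prove this by a double-counting argument modelled on the construction of the residual of a subspace design, feeding in the parameter relations of Table~\ref{tab:hyper} only at the very end. Since $H$ is non-degenerate, $\PS'$ is a genuine polar space, and the key structural observation is that its subspace poset is exactly the part of the subspace poset of $\PS$ lying below $H$: a $j$-dimensional subspace of $\PS$ is a $j$-dimensional subspace of $\PS'$ if and only if it is contained in $H$. Also, by Table~\ref{tab:hyper} we have $r - r' \in \{0,1\}$ and $\epsilon' = \epsilon + 2(r-r') - 1$. Fix $T' \in \qbinom{\PS'}{t-1}{Q'}$; one must show that the number $N$ of blocks of $\Res_H(D)$ through $T'$ equals the asserted $\lambda'$, independently of $T'$.

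To do this, I would double count the set $X$ of pairs $(T,B)$ with $T$ a $t$-dimensional subspace of $\PS'$ containing $T'$ and $B \in D$ a block with $T \subseteq B$. Counting by $T$ first: by Lemma~\ref{lem:count}\ref{lem:count:intermediate} applied inside $\PS'$ there are $\qnumb{r'-t+1}{Q'}$ choices of $T$, and since $D$ has strength $t$ each sits in exactly $\lambda$ blocks, so $\#X = \lambda\,\qnumb{r'-t+1}{Q'}$. Counting by $B$ first: a block $B \in D$ through $T'$ that is contained in $H$ contributes all $\qnumb{k-t+1}{q}$ of its $t$-subspaces through $T'$, while a block through $T'$ not contained in $H$ meets $H$ in a hyperplane $B\cap H$ (a $(k-1)$-space containing $T'$, since $B + H = V$) and hence contributes the $\qnumb{k-t}{q}$ $t$-subspaces through $T'$ lying in $B\cap H$. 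As Lemma~\ref{lem:reduced} gives $\lambda_{t-1} = \lambda\,\qnumb{r-t+1}{Q}/\qnumb{k-t+1}{q}$ blocks through $T'$ in total, of which exactly $N$ belong to $\Res_H(D)$,
\[
    \#X = N\,\qnumb{k-t+1}{q} + (\lambda_{t-1}-N)\,\qnumb{k-t}{q} = \qnumb{k-t}{q}\,\lambda_{t-1} + q^{k-t}\,N\text{.}
\]
Equating the two expressions and solving for $N$ produces a value that manifestly does not involve $T'$, so $\Res_H(D)$ is a $(t-1)$-$(r',k,\lambda')_{Q'}$ design.

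It then remains to identify $N$ with the two stated closed forms. Substituting $\lambda_{t-1}$, writing $\qnumb{m}{Q} = \qnumb{m}{q}(q^{m+\epsilon}+1)$ and $\qnumb{m}{Q'} = \qnumb{m}{q}(q^{m+\epsilon'}+1)$, and using $\epsilon' = \epsilon + 2(r-r')-1$, the two cases $r'=r$ (so $\epsilon' = \epsilon-1$) and $r'=r-1$ (so $\epsilon'=\epsilon+1$) collapse, via elementary $q$-identities such as $\qnumb{a+1}{q} - q\,\qnumb{a}{q} = 1$ and $\qnumb{a+1}{q} - \qnumb{a}{q} = q^{a}$, to the two displayed formulas for $\lambda'$. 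The conceptual content lies entirely in the two structural facts used above — that the subspaces of $\PS$ inside $H$ are precisely the subspaces of $\PS'$, and that a block not lying in $H$ cuts $H$ in a hyperplane; after that the argument is a routine double count. The only step demanding genuine care is the final simplification, where the rank-dependent shift in $\epsilon'$ has to be tracked correctly through the two cases, and this is the place I would expect to have to be most vigilant.
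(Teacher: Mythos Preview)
Your argument is correct and follows essentially the same double-counting strategy as the paper. The only difference is cosmetic: the paper counts pairs $(T,B)$ with $T'\subseteq T\subseteq B$ and $T\cap H=T'$ (i.e.\ $T$ \emph{not} contained in $H$), whereas you count the complementary pairs with $T\subseteq H$; solving either equation for $N$ yields exactly the same expression $\mu/(\qnumb{k-t+1}{q}\,q^{k-t})\cdot\lambda$ with $\mu=\qnumb{k-t+1}{q}\,\qnumb{r'-t+1}{Q'}-\qnumb{k-t}{q}\,\qnumb{r-t+1}{Q}$, and the final case split on $r'$ is identical.
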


\begin{proof}
    Let $T'\in\qbinom{\PS'}{t-1}{Q'}$ and $\Lambda = \{B' \in \Res_H(D) \mid T' \subseteq B'\}$.
    We count the set
    \[
        X = \Big\{(T,B)\in\qbinom{\PS}{t}{Q}\!\! \times D \mid T \subseteq B \text{ and }T \cap H = T'\Big\}
    \]
    in two ways.

    By $T \cap H = T'$, the involved subspaces $T$ necessarily contain $T'$.
    By Lemma~\ref{lem:count}\ref{lem:count:intermediate}, the number of all $\bar{T}\in \qbinom{\PS}{t}{Q}$ with $T' \subseteq \bar{T}$ is $\qnumb{r-t+1}{Q}$.
    Again by Lemma~\ref{lem:count}\ref{lem:count:intermediate}, the number of these $\bar{T}$ contained in $H$ is $\qnumb{r'-t+1}{Q'}$.
    The possible subspaces $T$ are exactly the remaining $\alpha \coloneqq \qnumb{r-t+1}{Q} - \qnumb{r'-t+1}{Q'}$ subspaces $\bar{T}$, as by the dimension formula
    they intersect $H$ in dimension $t-1$, which means $\bar{T} \cap H = T'$.
    Each such $T$ is contained in exactly $\lambda$ blocks of $D$.
    Hence $\#X = \alpha\lambda$.

    On the other hand, there are $\lambda_{t-1}$ ways to select $B\in D$ with $T' \subseteq B$.
    By the dimension formula, either $B \subseteq H$ (equivalently, $B\in \Lambda$) or $\dim(B \cap H) = k-1$.
    Only in the latter case, $B$ can be part of a pair in $X$.
    Fixing such a block $B$, we want to determine the number $\beta$ of $T\in \qbinom{B}{t}{q}$ with $T \cap H = T'$.
    These $T$ are of the form $T' + P$ with $P\in\qbinom{B}{1}{q} \setminus \qbinom{B\cap H}{1}{q}$.
    As there are $\qnumb{k}{q} - \qnumb{k-1}{q} = q^{k-1}$ choices for $P$, and any $\qnumb{t}{q} - \qnumb{t-1}{q} = q^{t-1}$ of them generate the same
    space $T = T' + P$, we get $\beta = \frac{q^{k-1}}{q^{t-1}} = q^{k-t}$.
    Therefore, $\#X = (\lambda_{t-1} - \#\Lambda)q^{k-t}$.

    Equating these two expressions for $\#X$ and using Lemma~\ref{lem:reduced}, we find
    \begin{align*}
        \#\Lambda
         & = \lambda_{t-1} - \frac{\alpha}{q^{k-t}}\lambda                                                                                \\
         & = \frac{1}{q^{k-t}}\Big(q^{k-t}\,\frac{\qnumb{r-t+1}{Q}}{\qnumb{k-t+1}{q}} - \qnumb{r-t+1}{Q} + \qnumb{r'-t+1}{Q'}\Big)\lambda \\
         & = \frac{\mu}{\qnumb{k-t+1}{q}\,q^{k-t}}\,\lambda
    \end{align*}
    with
    \[
        \mu = \qnumb{k-t+1}{q}\,\qnumb{r'-t+1}{Q'} - \qnumb{k-t}{q}\,\qnumb{r-t+1}{Q}\text{.}
    \]
    Since $\#\Lambda$ does not depend on the choice of $T'\in\qbinom{\PS'}{t-1}{Q'}$, we get that $\Res_H(D)$ is a $(t-1)$-$(r',k,\lambda')_{Q'}$
    design with $\lambda' = \#\Lambda$.

    Let $\epsilon'$ be the parameter of $\PS'$.
    For $r' = r$, we have $\epsilon' = \epsilon-1$ and thus
    \begin{align*}
        \mu & = \frac{\qnumb{r-t+1}{q}}{q-1}\Big((q^{k-t+1}-1)\, (q^{r-t+\epsilon} + 1) - (q^{k-t} - 1)\, (q^{r-t+1+\epsilon} + 1)\Big) \\
            & = \qnumb{r-t+1}{q}\, \frac{q^{k-t+1} - q^{r-t+\epsilon} - q^{k-t} + q^{r-t+1+\epsilon}}{q-1}                              \\
            & = \qnumb{r-t+1}{q}\, q^{k-t}\, (q^{r-k+\epsilon} + 1)\text{.}
    \end{align*}

    For $r' = r - 1$, we have $\epsilon' = \epsilon+1$ and therefore
    \begin{align*}
        \mu & = \frac{q^{r-t+1+\epsilon} + 1}{(q-1)^2}\Big((q^{k-t+1}-1)(q^{r-t}-1) - (q^{k-t} - 1)(q^{r-t+1} - 1)\Big) \\
            & = (q^{r-t+1+\epsilon} + 1)\, \frac{-q^{k-t+1} - q^{r-t} + q^{k-t} + q^{r-t+1}}{(q-1)^2}                   \\
            & = (q^{r-t+1+\epsilon} + 1)\, q^{k-t}\, \qnumb{r-k}{q}\text{.}
    \end{align*}
    This gives the stated expressions for $\lambda' = \#\Lambda$.
\end{proof}

\begin{remark}
    In the important case $k = r$, the formula in Theorem~\ref{thm:residual} simplifies to
    \[
        \lambda'
        = \begin{cases}
            (q^\epsilon + 1)\lambda & \text{if }r' = r\text{;}   \\
            0                       & \text{if }r' = r-1\text{.}
        \end{cases}
    \]
    The value $0$ for $r' = r-1$ has the following explanation.
    Here, $\PS'$ does not contain any subspaces of rank $r$ and thus, $\Res_H(D)$ will be the empty design.
    Therefore, for $k = r$ we would like to apply Theorem~\ref{thm:residual} for hyperplanes with $r' = r$ (which exist only for the types
    $U^-(q)$, $\Omega(q)$ and $\Omega^-(q)$).
    For this situation, the construction of Theorem~\ref{thm:residual} is already found in \cite[Thm.~3.3]{cossidente2021regular}.
\end{remark}

\begin{theorem}[{{\cite[Thm.~3.3]{cossidente2021regular}}}]\label{thm:superresidual}
	Assume $\mathcal{Q}$ is embedded as a non-degenerate hyperplane in a polar space $\bar{\mathcal{Q}}$ of rank $r + 1$.
	Denote the type of $\bar{\mathcal{Q}}$ by $\bar{Q}$.
	Let $D$ be a $t$-$(r,r,\lambda)_Q$ design in $\mathcal{Q}$ and
	\[
		\bar{D} = \{\bar{K}\in\qbinom{\bar{\mathcal{Q}}}{r+1}{\bar{Q}} \mid \bar{K} \text{ contains a block of } D\}\text{.}
	\]
	Then $\bar{D}$ is a $t$-$(r+1,r+1,(1+q^{\epsilon})\lambda)_{\bar{Q}}$ design.
\end{theorem}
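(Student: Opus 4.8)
The plan is to first pin down the blocks of $\bar{D}$ explicitly, and then to count, for an arbitrary $\bar{T}\in\qbinom{\bar{\mathcal{Q}}}{t}{\bar{Q}}$, the blocks of $\bar{D}$ through $\bar{T}$. Write $\bar V$ for the ambient space of $\bar{\mathcal{Q}}$ and $H$ for the non-degenerate hyperplane of $\bar V$ with $\mathcal{Q}=\bar{\mathcal{Q}}|_H$; by Table~\ref{tab:hyper} (applied with $\bar{\mathcal{Q}}$ as the larger space of rank $r+1$ and $\mathcal{Q}$ as the hyperplane section of rank $r$, so that the recorded relation $\epsilon'=\epsilon+2(r-r')-1$ gives $\epsilon=\bar\epsilon+1$) the parameter of $\bar{\mathcal{Q}}$ is $\bar\epsilon=\epsilon-1$. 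The first observation I would record is that, since $\mathcal{Q}$ has rank $r<r+1$, no generator of $\bar{\mathcal{Q}}$ lies inside $H$; hence for every generator $\bar{K}$ of $\bar{\mathcal{Q}}$ the intersection $\bar{K}\cap H$ is $r$-dimensional and is a generator of $\mathcal{Q}$, and a block $B\in D$ — an $r$-dimensional subspace of $\mathcal{Q}\subseteq H$ — satisfies $B\subseteq\bar{K}$ exactly when $B=\bar{K}\cap H$. From this I would deduce that $\bar{D}=\{\bar{K}\in\qbinom{\bar{\mathcal{Q}}}{r+1}{\bar{Q}}\mid\bar{K}\cap H\in D\}$, that each $\bar{K}\in\bar{D}$ contains the single block $\bar{K}\cap H$, and (via Lemma~\ref{lem:count}\ref{lem:count:intermediate} inside $\bar{\mathcal{Q}}$) that every generator of $\mathcal{Q}$ is contained in exactly $\qnumb{1}{\bar{Q}}=q^{1+\bar\epsilon}+1=q^\epsilon+1$ generators of $\bar{\mathcal{Q}}$, each meeting $H$ precisely in it.

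Next, for a fixed $\bar{T}\in\qbinom{\bar{\mathcal{Q}}}{t}{\bar{Q}}$ I would compute $N(\bar{T}):=\#\{\bar{K}\in\bar{D}\mid\bar{T}\subseteq\bar{K}\}$ by distinguishing whether $\bar{T}\subseteq H$. If $\bar{T}\subseteq H$, then $\bar{T}$ is a $t$-subspace of $\mathcal{Q}$; a generator $\bar{K}\in\bar{D}$ contains $\bar{T}$ iff its associated block $\bar{K}\cap H$ does, and since each of the $\lambda$ blocks $B\supseteq\bar{T}$ sits in exactly $q^\epsilon+1$ members of $\bar{D}$, this gives $N(\bar{T})=(q^\epsilon+1)\lambda$. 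If $\bar{T}\not\subseteq H$, I would set $T'=\bar{T}\cap H$ (a $(t-1)$-subspace of $\mathcal{Q}$) and pick a point $\bar P\subseteq\bar{T}$ with $\bar P\not\subseteq H$, so $\bar{T}=\langle T',\bar P\rangle$. A dimension count then shows that a generator $\bar{K}\in\bar{D}$ contains $\bar{T}$ iff $\bar{K}=\langle B,\bar P\rangle$ for a block $B\in D$ with $T'\subseteq B$ and $B\subseteq\bar P^\perp$ (the block being $B=\bar{K}\cap H$), and this correspondence is a bijection. Hence $N(\bar{T})=\#\{B\in D\mid T'\subseteq B\subseteq\bar P^\perp\cap H\}=\#\{B\in\Res_{H''}(D)\mid T'\subseteq B\}$, where $H'':=\bar P^\perp\cap H$.

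The last step, and the one I expect to be the real obstacle, is to identify $H''$ well enough to invoke Theorem~\ref{thm:residual}: I must show it is a \emph{non-degenerate} hyperplane of $H$ with $\bar{\mathcal{Q}}|_{H''}$ of rank exactly $r$ (the ``$r'=r$'' case), since otherwise Theorem~\ref{thm:residual} returns $0$ for $k=r$. Here I would use that $\bar P$ is isotropic, so $\bar P^\perp$ is a degenerate hyperplane of $\bar V$ with radical $\langle\bar P\rangle$, on which the (quadratic/sesquilinear) form vanishes, so that the form descends to a non-degenerate form on $\bar P^\perp/\bar P$; because $\bar P\not\subseteq H$, the subspace $H''=\bar P^\perp\cap H$ is a complement of $\langle\bar P\rangle$ in $\bar P^\perp$, whence the form is non-degenerate on $H''$ and the quotient map restricts to an isometry $H''\cong\bar P^\perp/\bar P$. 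Thus $H''$ is a non-degenerate hyperplane of $H$ and $\bar{\mathcal{Q}}|_{H''}\cong\bar P^\perp/\bar P$ is a polar space of type $\bar{Q}$ and rank $(r+1)-1=r$ — the same rank as $\mathcal{Q}$. Then Theorem~\ref{thm:residual} (case $r'=r$, $k=r$; cf.\ the Remark following it) shows $\Res_{H''}(D)$ is a $(t-1)$-$(r,r,(q^\epsilon+1)\lambda)_{\bar{Q}}$ design in $\bar{\mathcal{Q}}|_{H''}$, and since $T'$ is a totally isotropic $(t-1)$-subspace lying in $H''$ (because $T'\subseteq\bar{T}\subseteq\bar P^\perp$ and $T'\subseteq\mathcal{Q}\subseteq H$), it is contained in exactly $(q^\epsilon+1)\lambda$ of these blocks, so $N(\bar{T})=(q^\epsilon+1)\lambda$ in this case as well. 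Combining the two cases, $\bar{D}$ is a $t$-$(r+1,r+1,(q^\epsilon+1)\lambda)_{\bar{Q}}$ design. The crux is exactly that the ``incidental'' hyperplane $\bar P^\perp\cap H$ is always of the rank-preserving kind — the radical-complement description of $H''$ is what makes this hold uniformly for all types — and the only routine point needing a line of care in characteristic $2$ is the descent of the form to $\bar P^\perp/\bar P$. (When $t=0$ the second case is vacuous and the first already yields $\#\bar{D}=(q^\epsilon+1)\lambda$.)
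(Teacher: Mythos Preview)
The paper does not supply its own proof of this theorem; it is quoted from \cite[Thm.~3.3]{cossidente2021regular}, accompanied only by the one-line remark that in the relevant situations every generator of $\bar{\mathcal{Q}}$ contains a unique generator of $\mathcal{Q}$, and by the observation that the residual design of Theorem~\ref{thm:residual} arises as a derived design of $\bar{D}$.

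Your argument is correct. The first part (description of $\bar{D}$ via $\bar{K}\mapsto\bar{K}\cap H$, and the count $q^{1+\bar\epsilon}+1=q^\epsilon+1$ of generators of $\bar{\mathcal{Q}}$ over a fixed generator of $\mathcal{Q}$) is exactly the ``unique generator'' fact the paper alludes to, and handles the case $\bar{T}\subseteq H$ cleanly. In the case $\bar{T}\not\subseteq H$, your bijection between blocks of $\bar{D}$ through $\bar{T}$ and blocks of $D$ through $T'$ lying in $H''=\bar{P}^\perp\cap H$ is sound, and the identification of $H''$ with $\bar{P}^\perp/\bar{P}$ (hence a non-degenerate hyperplane section of $\mathcal{Q}$ of the same rank $r$ and of type $\bar{Q}$) is the right way to see that one lands in the ``$r'=r$'' branch of Theorem~\ref{thm:residual}. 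The one place where the type $Q$ genuinely matters is the parabolic case $Q=\Omega(q)$, where non-degenerate hyperplanes of $H$ come in two flavours; your isometry $H''\cong\bar{P}^\perp/\bar{P}$ pins down that $H''$ is of the rank-preserving (hyperbolic) kind, which is precisely what is needed.

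It is worth noting that your proof and the paper's remark after Theorem~\ref{thm:superresidual} are two sides of the same coin: the paper points out that $\Res_{H''}(D)$ coincides with $\Der_{\bar{P}}(\bar{D})$, reading this as a way to recover Theorem~\ref{thm:residual} (for $k=r$, $r'=r$) from Theorem~\ref{thm:superresidual}; you run the identification in the opposite direction, using the already-proved Theorem~\ref{thm:residual} to establish Theorem~\ref{thm:superresidual}. This gives a pleasant internal proof that does not rely on the external reference.
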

In the theorem, the possible combinations for $(\mathcal{Q},\bar{\mathcal{Q}})$ are $(\Omega^-(2r+2,q),\Omega(2(r+1) + 1,q))$, $(\Omega(2r + 1,q),\Omega^+(2(r+1),q)$
and $(U(2r+1,q),U(2(r+1),q))$.
The construction is based on the fact that in these situations, every generator of $\bar{\mathcal{Q}}$ contains a unique generator of $\mathcal{Q}$.
We remark that our residual design in Theorem~\ref{thm:residual} is the derived design of $\bar{D}$ in Theorem~\ref{thm:superresidual}.

\begin{remark}
    While the polar space version of the derived design in Theorem~\ref{thm:derived} is a straightforward generalization of the subspace version in~\cite{Kiermaier-Laue-2015}, our approach to the residual design in polar spaces in Theorem~\ref{thm:residual} feels somewhat more distant.
    The reason is that it involves a change of the type of the ambient polar space.
    One might look for a similar construction where the ambient space is restricted to a proper subspace $U$ of the same type.
    The largest possible and hence natural choice of such a subspace is $U = L^\perp$ with a hyperbolic line $L$.
    Restricting the polar space $\PS$ to $U$, the rank drops by one.
    Therefore, for a meaningful definition, one would expect that also the strength of the resulting notion of a residual design drops by one compared to the starting design on $\PS$.%
    \footnote{The residual of a classical $t$-$(v,k,\lambda)$ design on a $v$-element set $V$ has the parameters $(t-1)$-$(v-1,k,\lambda')$ with a certain value $\lambda'$.
    In the polar space setting, the role of $v$ is taken over by the rank $r$.
    Hence, we would expect that the residual of a $t$-$(r,k,\lambda)_Q$ polar space design has the parameters $(t-1)$-$(r-1,k,\lambda'')_Q$.}

    However, this is not generally the case.
    We checked computationally that for the constructed $2$-$(4,3,3)_{\Omega^+(2)}$ design (see Section \ref{sec:results}), only $15$ out of the $4320$
    hyperbolic lines lead to a $1$-$(3,3,2)_{\Omega^+(2)}$ design, and for the constructed $2$-$(4,3,6)_{\Omega(2)}$ design, no hyperbolic line leads to a $1$-design.
\end{remark}

So far, we have analogs of the supplementary, the reduced, the derived and the residual design in polar spaces.
In the theory of block and subspace designs, there is a further general construction, the dual (or complementary) design.
However, we did not find a convincing counterpart of that construction.
Our feeling is that there might not be such a thing, since the classical construction of the dual design is based on the fact that the
underlying subset resp.~subspace lattice is self-dual, which however is not true for the poset of all subspaces of a polar space.

\pagebreak

\section{Intersection numbers}\label{sec:intersection}

Another classical topic in the theory of combinatorial and subspace designs are the intersection numbers, which describe the intersection sizes of the
blocks of a design with a fixed subspace $S$.
For combinatorial designs they have been originally defined in \cite{Mendelsohn-1971} for blocks $S$ and independently as ``$i$-Treffer'' for
general sets $S$ in \cite{Oberschelp-1972-MPSemBNF19:55-67}.

Lemma~\ref{lem:reduced} allows us to develop the corresponding result for designs in polar spaces.
In fact, the statements and the proofs can be done almost literally in the same way as for subspace designs in \cite{Kiermaier-Pavcevic-2015-JCD23[11]:463-480}.

For a fixed $t$-$(r,k,\lambda)_Q$ design $D$ in the polar space $\mathcal{Q}$ and a subspace $S$ of $\mathcal{Q}$, we define the
$i$-th \emph{intersection number} ($i\in\{0,\ldots,k\}$) as
\[
	\alpha_i(S) = \#\{B\in D \mid \dim(B \cap S) = i\}\text{.}
\]
If the space $S$ is clear from the context, we use the abbreviation $\alpha_i = \alpha_i(S)$.

First, we derive a polar space counterpart of the \emph{intersection equations} or \emph{Mendelsohn equations},
see \cite[Th.~1]{Mendelsohn-1971}, \cite[Satz~2]{Oberschelp-1972-MPSemBNF19:55-67} for classical designs and
\cite[Th.~2.4]{Kiermaier-Pavcevic-2015-JCD23[11]:463-480} for subspace designs.

\begin{theorem}[Intersection equations]
	\label{thm:mendelsohn}
	Let $D$ be a $t$-$(r,k,\lambda)_Q$ subspace design and $S$ a subspace of $\mathcal{Q}$ of dimension $s = \dim(S)$.
	For $i\in\{0,\ldots,t\}$ we have the following equation on the intersection numbers of $S$ with respect to $D$:
	\[
		\sum_{j = i}^s \qbinom{j}{i}{q} \alpha_j = \qbinom{s}{i}{q}\lambda_i\text{.}
	\]
\end{theorem}

\begin{proof}
We count the set $X$ of all pairs $(I,B)\in \qbinom{V}{i}{q} \times \mathcal{B}$ with $I \leq B\cap S$ in two ways:
On the one hand, there are $\qbinom{s}{i}{q}$ possibilities for the choice of $I\in\qbinom{S}{i}{q}$.
By Lemma~\ref{lem:reduced}, there are $\lambda_i$ blocks $B$ such that $I \leq B$, which shows that $\#X$ equals the right-hand side of the stated equation.
On the other hand, fixing a block $B$, the number of $i$-subspaces $I$ of $B\cap S$ is $\qbinom{\dim(B\cap S)}{i}{q}$.
Summing over the possibilities for $j = \dim(B\cap S)$, we see that $\#X$ also equals the left-hand side of the stated equation.
\end{proof}

The intersection equations can be read as a linear system of equations $Ax = b$ on the \emph{intersection vector} $x = (\alpha_0, \alpha_1,\ldots, \alpha_k)$.
The left $(t+1)\times(t+1)$ square part of the matrix $A = (\qbinom{j}{i}{q})_{ij}$ $(i\in\{0,\ldots,t\}$ and $j\in\{0,\ldots,k\})$ is called
the \emph{upper triangular $q$-Pascal matrix}, which is known to be invertible with inverse matrix $\left( (-1)^{j-i} q^{\binom{j-i}{2}} \qbinom{j}{i}{q}\right)_{ij}$.
Left-multiplication of the equation system with this inverse yields a parameterization of the intersection numbers $\alpha_0,\ldots,\alpha_t$ by $\alpha_{t+1},\ldots,\alpha_k$.
In this way, we get a counterpart of the Köhler equations, see \cite[Satz~1]{Koehler-1988_1989-DM73[1-2]:133-142} for classical
designs and \cite[Th. 2.6]{Kiermaier-Pavcevic-2015-JCD23[11]:463-480} for subspace designs.

\pagebreak

\begin{theorem}[Köhler equations]
\label{thm:koehler}
Let $D$ be a $t$-$(r,k,\lambda)_Q$ subspace design and $S$ a subspace of $V$ of dimension $s = \dim(S)$.
For $i\in\{0,\ldots,t\}$, a parametrization of the intersection number $\alpha_i$ by $\alpha_{t+1},\ldots,\alpha_k$ is given by
\[
	\alpha_i
	= \qbinom{s}{i}{q}\sum_{j = i}^t (-1)^{j-i} q^{\binom{j-i}{2}} \qbinom{s-i}{j-i}{q}\lambda_j
	+ (-1)^{t+1-i} q^{\binom{t+1-i}{2}} \sum_{j = t+1}^k \qbinom{j}{i}{q} \qbinom{j-i-1}{t-i}{q}\alpha_j\text{.}
\]
\end{theorem}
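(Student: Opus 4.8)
The plan is to obtain the Köhler equations purely as an algebraic consequence of the Mendelsohn equations of Theorem~\ref{thm:mendelson}, by left-multiplying the linear system with the known inverse of the upper triangular $q$-Pascal matrix. First I would set up the system precisely: write the Mendelsohn relations for $i\in\{0,\ldots,t\}$ as $\sum_{j=i}^{s}\qbinom{j}{i}{q}\alpha_j=\qbinom{s}{i}{q}\lambda_i$, and split the left-hand sum at $t$, moving the ``known'' tail $\sum_{j=t+1}^{s}\qbinom{j}{i}{q}\alpha_j$ to the right. (Note that for $j>s$ the coefficient $\qbinom{s}{i}{q}$-type terms vanish, so the sum over $j$ up to $k$ and up to $s$ agree; one can safely write $k$ throughout.) This leaves a square $(t+1)\times(t+1)$ system $A'\,(\alpha_0,\ldots,\alpha_t)^{\mathsf T}=c$, where $A'=\bigl(\qbinom{j}{i}{q}\bigr)_{0\le i,j\le t}$ is the upper triangular $q$-Pascal matrix and $c_i=\qbinom{s}{i}{q}\lambda_i-\sum_{j=t+1}^{k}\qbinom{j}{i}{q}\alpha_j$.

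Next I would invoke the stated inversion formula: $(A')^{-1}=\bigl((-1)^{j-i}q^{\binom{j-i}{2}}\qbinom{j}{i}{q}\bigr)_{0\le i,j\le t}$. Applying this row by row gives, for each $i\in\{0,\ldots,t\}$,
\[
    \alpha_i=\sum_{\ell=i}^{t}(-1)^{\ell-i}q^{\binom{\ell-i}{2}}\qbinom{\ell}{i}{q}
    \left(\qbinom{s}{\ell}{q}\lambda_\ell-\sum_{j=t+1}^{k}\qbinom{j}{\ell}{q}\alpha_j\right).
\]
It then remains to simplify the two resulting double sums into the closed forms claimed in the theorem.

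For the first double sum I would use the $q$-Vandermonde-type identity $\qbinom{\ell}{i}{q}\qbinom{s}{\ell}{q}=\qbinom{s}{i}{q}\qbinom{s-i}{\ell-i}{q}$, pull $\qbinom{s}{i}{q}$ out, reindex $j:=\ell$, and recognize the summand as exactly $\qbinom{s}{i}{q}\sum_{j=i}^{t}(-1)^{j-i}q^{\binom{j-i}{2}}\qbinom{s-i}{j-i}{q}\lambda_j$. For the second (the $\alpha_j$-tail) I would fix $j\in\{t+1,\ldots,k\}$, use $\qbinom{\ell}{i}{q}\qbinom{j}{\ell}{q}=\qbinom{j}{i}{q}\qbinom{j-i}{\ell-i}{q}$ to pull out $\qbinom{j}{i}{q}$, substitute $m:=\ell-i$, and reduce the inner sum to $\sum_{m=0}^{t-i}(-1)^{m}q^{\binom{m}{2}}\qbinom{j-i}{m}{q}$. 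This is a truncated $q$-binomial alternating sum, and the standard evaluation $\sum_{m=0}^{N}(-1)^m q^{\binom{m}{2}}\qbinom{M}{m}{q}=(-1)^{N}q^{\binom{N+1}{2}}\qbinom{M-1}{N}{q}$ (a finite-difference/partial-summation identity, provable by induction on $N$ via the $q$-Pascal recursion) with $M=j-i$, $N=t-i$ collapses it to $(-1)^{t-i}q^{\binom{t+1-i}{2}}\qbinom{j-i-1}{t-i}{q}$, matching the sign $(-1)^{t+1-i}$ after accounting for the minus sign carried from moving the tail across the equation. Combining the two pieces yields the stated formula.

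The main obstacle is entirely bookkeeping: getting the truncated alternating $q$-binomial sum identity right (including the exponent $\binom{t+1-i}{2}$ and the shift $\qbinom{j-i-1}{t-i}{q}$) and tracking the signs through the transposition of the tail term. There is no conceptual difficulty — the whole argument is linear algebra over the $q$-Pascal matrix plus two routine $q$-binomial identities — so I would simply cite \cite[Th.~2.6]{Kiermaier-Pavcevic-2015-JCD23[11]:463-480}, where the identical computation is carried out for subspace designs, and remark that the proof transfers verbatim once $\lambda_i$ is read as the reduced parameter supplied by Lemma~\ref{lem:reduced}.
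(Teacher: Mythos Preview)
Your proposal is correct and follows exactly the approach the paper takes: the paper's proof consists of the paragraph preceding Theorem~\ref{thm:koehler}, which says to read the Mendelsohn equations as a linear system, left-multiply by the known inverse of the upper triangular $q$-Pascal matrix, and cite \cite[Th.~2.6]{Kiermaier-Pavcevic-2015-JCD23[11]:463-480} for the identical computation in the subspace-design setting. Your write-up simply spells out the two $q$-binomial simplifications (trinomial revision and the truncated alternating $q$-sum) that the paper leaves implicit.
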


\begin{corollary}\label{cor:intersection_steiner}
	Let $D$ be a $t$-$(r,k,1)_Q$ Steiner system.
	Let $S\in D$ be a block.
	Then the intersection vector of $S$ is uniquely determined by $\alpha_k = 1$, $\alpha_{t+1} = \ldots = \alpha_{k-1} = 0$ and the
    values $\alpha_i$ ($i\in\{0,\ldots,t\}$) as in Theorem~\ref{thm:koehler}.
\end{corollary}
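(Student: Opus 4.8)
The plan is to specialize Theorem~\ref{thm:koehler} to the Steiner system case $\lambda = 1$ with the block $S$ itself taken as the fixed subspace. The key observation is that when $S \in D$ is a block of a $t$-$(r,k,1)_Q$ Steiner system, the intersection numbers $\alpha_j$ for the top of the range are forced: since $S$ is itself a block, $\alpha_k(S) \geq 1$, and since $D$ has strength $t$ and $\lambda = 1$, no other block can meet $S$ in a subspace of dimension $\geq t$ (such a subspace would lie in two distinct blocks, contradicting $\lambda = 1$). Hence for $t < j \leq k-1$ we have $\alpha_j(S) = 0$, and $\alpha_k(S) = 1$ exactly (again, a block equal to $S$ is unique since $D$ is simple, i.e.\ has no repeated blocks).

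First I would record these forced values $\alpha_{t+1} = \cdots = \alpha_{k-1} = 0$ and $\alpha_k = 1$ explicitly, justifying each by the $\lambda = 1$ and simplicity hypotheses as above. Next, I would invoke Theorem~\ref{thm:koehler} with $s = \dim(S) = k$: that theorem gives each $\alpha_i$ for $i \in \{0,\ldots,t\}$ as an explicit expression in the fixed parameters $\lambda_j$ (which are determined by $t,r,k,\lambda = 1,Q$ via Lemma~\ref{lem:reduced}) together with the higher intersection numbers $\alpha_{t+1},\ldots,\alpha_k$. Substituting the forced values into that formula leaves $\alpha_i$ as a fully determined number depending only on the design parameters; in particular only the single term with $j = k$ survives from the second sum (all terms $t+1 \leq j \leq k-1$ vanish), so one may even write down $\alpha_i$ in closed form if desired. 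This shows the entire intersection vector $(\alpha_0,\ldots,\alpha_k)$ is uniquely determined, which is the claim.

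There is essentially no hard step here: the corollary is a direct readout of Theorem~\ref{thm:koehler} once the top intersection numbers are pinned down. The only point requiring a moment's care — and the one I would state most carefully — is the argument that $\alpha_j(S) = 0$ for $t+1 \leq j \leq k-1$: this uses that a $j$-dimensional subspace with $j \geq t$ contains a $t$-dimensional subspace, which by the Steiner property lies in a unique block, so a block $B \neq S$ with $\dim(B \cap S) \geq t$ would force $B$ to coincide with $S$ on that $t$-space's unique block, a contradiction. I would phrase this as: any $t$-subspace $T \subseteq S$ lies in exactly one block, namely $S$ itself, so no block other than $S$ meets $S$ in dimension $\geq t$. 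The remaining substitution and the appeal to Theorem~\ref{thm:koehler} are routine.
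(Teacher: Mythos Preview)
Your proposal is correct and follows essentially the same approach as the paper: establish $\alpha_k = 1$, argue that $\alpha_j = 0$ for $t+1 \leq j \leq k-1$ via the Steiner property (a $t$-subspace of $B \cap S$ would lie in two distinct blocks), and then invoke Theorem~\ref{thm:koehler} to determine the remaining $\alpha_i$. The paper's proof is slightly terser but the logic is identical.
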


\begin{proof}
	Clearly, $\alpha_k = 1$.
	Assume that $\alpha_j \neq 0$ for a $j\in\{t+1,\ldots,k-1\}$.
	Then there exists a block $B\in D$, $B \neq S$ with $\dim(B \cap S) = j > t$.
	Now any $t$-subspace of $B \cap S$ is contained in the two blocks $B \neq S$ in contradiction to $\lambda = 1$.

	So $\alpha_{t+1} = \ldots = \alpha_{k-1} = 0$, and the $\lambda_i$ with $i\in\{0,\ldots,t\}$ are determined by Theorem~\ref{thm:koehler}.
\end{proof}

In hyperbolic polar spaces $\Omega^+(q)$, there is a unique partition of the generators into two parts, commonly
called \emph {Latins} and \emph{Greeks}, which are a design with the parameters $(r-1)$-$(r,r,1)_{\Omega^+(q)}$, see e.g. \cite{deBeuleKleinMetsch2011}.
We will call them legs of the \emph{Latin-Greek halvings}, or simply the Latin-Greek halvings.
(Note that by $\lambda_{\max} = q^{1+\epsilon} + 1 = 2$, they are halvings.)
For two generators $B$ and $B'$, the number $r - \dim(B \cap B')$ is even if and only if $B$ and $B'$ are contained in the same leg of the Latin-Greek halving.
Hence the intersection numbers of the Latin-Greek halving with respect to a block $S$ have the property $\alpha_i = 0$ if $r-i$ is odd.

The theory of intersection numbers yields the following uniqueness result.

\begin{lemma}\label{lem:latin_greek_unique}
	Let $D$ be a $(r-1)$-$(r,r,1)_{\Omega^+(q)}$ Steiner system.
	Then $D$ is a Latin-Greek halving.
\end{lemma}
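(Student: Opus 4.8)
The plan is to show that a $(r-1)$-$(r,r,1)_{\Omega^+(q)}$ Steiner system $D$ must consist of exactly one of the two legs of the Latin–Greek halving, by a counting argument combined with the intersection-number machinery. First I would recall the setup: in $\Omega^+(q)$ the generators split into Latins $\mathcal{L}$ and Greeks $\mathcal{G}$, each of which is a $(r-1)$-$(r,r,1)_{\Omega^+(q)}$ design (here $\lambda_{\max}=q^{1+\epsilon}+1=2$ since $\epsilon=-1$), and two generators $B,B'$ lie in the same leg if and only if $r-\dim(B\cap B')$ is even. Applying Lemma~\ref{lem:reduced} with $t=r-1$, $s=0$ to $D$, $\mathcal{L}$, and $\mathcal{G}$, all three designs have the same block count $\lambda_0=\qbinom{r}{r-1}{\Omega^+(q)}/\qbinom{r}{r-1}{q} = \prod_{i=2}^r(q^{i-1}+1)$, which is exactly $\#\mathcal{L}=\#\mathcal{G}=\tfrac12\qbinom{r}{r}{\Omega^+(q)}$. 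So $D$ has the right size; the task is to show it coincides with one leg.

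The key step is to pin down the intersection numbers. Let $S\in D$ be any block. By Corollary~\ref{cor:intersection_steiner} the intersection vector $(\alpha_0,\ldots,\alpha_r)$ of $S$ with respect to $D$ is \emph{uniquely determined}: one has $\alpha_r=1$, $\alpha_t=\cdots=\alpha_{r-1}=0$ for $t=r-1$ (so in fact only $\alpha_{r-1}$ is forced to $0$, and there is nothing between $t+1$ and $r-1$), and $\alpha_0,\ldots,\alpha_{r-1}$ are then given by the Köhler formula in Theorem~\ref{thm:koehler} with the sole free variable $\alpha_r=1$. But the Latin leg $\mathcal{L}$ is itself a $(r-1)$-$(r,r,1)_{\Omega^+(q)}$ Steiner system, and if $S\in\mathcal{L}$ its intersection vector with respect to $\mathcal{L}$ is given by the \emph{same} uniquely-determined formula. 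Hence for a block $S$ the intersection vector of $S$ w.r.t.\ $D$ equals the intersection vector of $S$ w.r.t.\ $\mathcal{L}$ (equivalently w.r.t.\ $\mathcal{G}$, which gives the identical vector). In particular $\alpha_i(S)=0$ whenever $r-i$ is odd: every block of $D$ meets $S$ in a subspace of co-parity-even codimension, i.e.\ \emph{every block of $D$ lies in the same leg as $S$}.

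It remains to convert this ``locally same leg'' statement into ``globally one leg''. Fix a block $S\in D$ and say (WLOG) $S\in\mathcal{L}$. The previous paragraph shows $D\subseteq\mathcal{L}$. But $\#D=\lambda_0=\#\mathcal{L}$, so $D=\mathcal{L}$, a Latin–Greek leg, as claimed. The main obstacle I anticipate is purely bookkeeping: verifying that the Köhler parametrization of Theorem~\ref{thm:koehler}, applied with $t=r-1$, $s=r$, and the forced boundary data $\alpha_r=1$, genuinely produces a vector supported on the indices $i$ with $r-i$ even — this should follow from the combinatorial parity fact about $\Omega^+(q)$ already quoted before the lemma (two generators are in the same leg iff $r-\dim(B\cap B')$ is even) applied to the known design $\mathcal{L}$, rather than from a direct evaluation of the $q$-binomial sum, so no hard computation is actually needed. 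A minor point to state carefully is the degenerate case $r=1$ (then $t=0$, $D$ is a $0$-design, i.e.\ all of $\qbinom{\PS}{1}{\Omega^+(q)}$, which is the whole of $\mathcal{Q}$ split into two points — one checks the statement holds trivially or is vacuous), but this is not a genuine difficulty.
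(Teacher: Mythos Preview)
Your proof is correct and follows essentially the same route as the paper's: invoke Corollary~\ref{cor:intersection_steiner} to see that the intersection vector of a block $S\in D$ is determined by the parameters alone, compare with the known intersection vector of a Latin--Greek leg to deduce $\alpha_i(S)=0$ for $r-i$ odd, and conclude that $D$ lies in (hence equals, by cardinality) one leg. The paper compresses your explicit cardinality step into the parenthetical ``only (and then all the) blocks from the same leg as $S$'', but the argument is the same.
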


\begin{proof}
	Let $S$ be a block of $D$.
	By Corollary~\ref{cor:intersection_steiner}, the intersection vector of $S$ is uniquely determined by the parameters of the Steiner system $D$,
    and hence it equals the corresponding intersection vector of the blocks of the Latin-Greek halving.
	Therefore $\alpha_i(S) = 0$ if $r - i$ is odd.
	This implies that $D$ contains only (and then all the) blocks from the same leg as $S$.
\end{proof}

\section{Fisher's inequality and symmetric designs}\label{sec:fisher}

For combinatorial designs, a classical result is Fisher's inequality~\cite{Bose1949,handbook-2006}, which states that a
$t$-$(v,k,\lambda)$ design of strength $t \geq 2$ has at least as many blocks as points.
In the case of equality, a design is called \emph{symmetric}.
For subspace designs, Fisher's inequality has been proven in~\cite{Cameron-1974}, also showing that there are no symmetric subspace designs of strength $t \geq 2$.
For generalizations of Fisher's inequality for classical and subspace designs, see \cite{Kiermaier-Wassermann-2023-GlasMatSerIII58[2]:181-200}.

\pagebreak

The natural question in our situation is if there exist designs such that
\begin{equation}\label{eq:fisher_ineq}
	\text{number of blocks} < \text{number of points of the ambient polar space}\text{.}
\end{equation}
Moreover, we would also like to investigate the equality case, where the corresponding designs are called \emph{symmetric}.
We start by looking at the consequences of Lemma~\ref{lem:reduced}.

\begin{lemma}\label{lem:fisher_admissibility}
	For $r \geq t \geq 2$ and $\lambda\in\{1,\ldots,\lambda_{\max}\}$, the admissible parameters $t$-$(r,k,\lambda)_Q$ such that the number of blocks
    is less or equal than the number of points of $\mathcal{Q}$, are given by
	\begin{itemize}
		\item $2$-$(r,r,1)_{\Omega(q)}$ and $2$-$(r,r,1)_{W(q)}$ with $r \geq 2$,
		\item $2$-$(r,r,\lambda)_{U^+(q)}$ with $\lambda = 1$ for $r = 2$, or $\lambda\in\{1,\ldots,\sqrt{q}\}$ for $r \geq 3$,
		\item $2$-$(r,r,\lambda)_{\Omega^+(q)}$ with $\lambda = 1$ for $r = 2$, or $\lambda \in \{1,2\}$ for $r = 3$, or $\lambda\in\{1,\ldots,q+1\}$ for $r \geq 4$,
		\item $3$-$(r,r,1)_{\Omega^+(q)}$ with $r\in\{3,4\}$,
	\end{itemize}
	where equality occurs precisely for
	\begin{itemize}
		\item $2$-$(2,2,1)_{\Omega(q)}$ and $2$-$(2,2,1)_{W(q)}$,
		\item $3$-$(4,4,1)_{\Omega^+(q)}$ and its reduced parameters $2$-$(4,4,q+1)_{\Omega^+(q)}$.
	\end{itemize}
\end{lemma}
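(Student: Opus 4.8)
The plan is to reduce the statement to an explicit inequality between two known polynomial-type expressions and then enumerate the finitely many solutions. The number of blocks of a $t$-$(r,k,\lambda)_Q$ design is $\lambda_0$, computed by Lemma~\ref{lem:reduced}, and the number of points of $\mathcal{Q}$ is $\qnumb{r}{Q}=\qnumb{r}{q}(q^{r+\epsilon}+1)$. So the first step is to write down the inequality $\lambda_0 \le \qnumb{r}{Q}$ in the form
\[
    \lambda \cdot \frac{\qbinom{r}{t}{q}}{\qbinom{k}{t}{q}} \cdot \prod_{i=r-t+1}^{r}(q^{i+\epsilon}+1) \;\le\; \qnumb{r}{q}(q^{r+\epsilon}+1)\text{.}
\]
Since $\lambda \ge 1$ and admissibility forces $\lambda$ to be a multiple of $\Delta_\lambda$, it suffices to analyze when the left-hand side with $\lambda=1$ (equivalently $\lambda=\Delta_\lambda$) already exceeds the right-hand side; the admissible $\lambda$-range then follows by dividing through.

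The second step is a monotonicity analysis in the parameters $k$ and $t$. The block count $\lambda_0$ is, for fixed $\lambda$ and fixed $t$, \emph{decreasing} in $k$ (larger blocks cover more $t$-spaces, so fewer are needed), which means the case $k=r$ is the one minimizing the block count; hence if a design with $k<r$ satisfies the Fisher-type inequality, so does the corresponding parameter set with $k=r$. This lets me restrict to $k=r$ from the outset, where $\qbinom{k}{t}{q}=\qbinom{r}{t}{q}$ and the inequality collapses to
\[
    \lambda \cdot \prod_{i=r-t+1}^{r}(q^{i+\epsilon}+1) \;\le\; \qnumb{r}{q}(q^{r+\epsilon}+1)\text{.}
\]
For $t\ge 2$ the product on the left has at least two factors, the largest being $q^{r+\epsilon}+1$, so after cancelling that factor I am comparing $\lambda\prod_{i=r-t+1}^{r-1}(q^{i+\epsilon}+1)$ against $\qnumb{r}{q}=1+q+\cdots+q^{r-1}$. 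Now a size estimate: each factor $q^{i+\epsilon}+1$ is roughly $q^{i+\epsilon}$, so the left side grows like $q^{(t-1)r}$ up to lower-order terms while the right side grows only like $q^{r-1}$; thus for $t\ge 3$ the inequality can only hold when $r$ is tiny, and for $t=2$ it forces $q^{r-1+\epsilon}+1$ to be not much larger than $\qnumb{r}{q}$, which pins down $\epsilon\in\{-1,-1/2,0\}$ (the types $\Omega^+$, $U^+$, $W$/$\Omega$) and bounds $\lambda$. The remaining work is bookkeeping: for each of these types substitute the value of $\epsilon$, solve the resulting inequality for $\lambda$ as a function of $r$, and read off the small exceptional values of $r$ (the cases $r=2$, $r=3$ in $\Omega^+$, and $r\in\{3,4\}$ for $t=3$) where the generic bound is not yet tight. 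Throughout, one must double-check that the listed parameters are genuinely admissible in the sense of Lemma~\ref{lem:reduced} — i.e.\ that the relevant $\lambda_s$ are integers — which for $k=r$ is automatic since $\Delta_\lambda=1$, as noted after Lemma~\ref{lem:reduced}; the symplectic/parabolic and orthogonal cases of $t=3$ need the separate check that $3$-$(r,r,1)$ is admissible and fails the inequality for $r\ge 5$.

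The equality cases are then extracted by running the same inequality as an equation. For $t=2$, $\epsilon=0$ (symplectic or parabolic) with $\lambda=1$, equality $\qnumb{r}{q}(q^r+1)=\qnumb{r}{q}(q^r+1)$ — wait, more carefully: with $k=r$, $t=2$, $\lambda=1$ the block count is $(q^{r-1+\epsilon}+1)(q^{r+\epsilon}+1)$ and this equals $\qnumb{r}{q}(q^{r+\epsilon}+1)$ exactly when $q^{r-1+\epsilon}+1=\qnumb{r}{q}$, i.e.\ when $r=2$, $\epsilon=0$, giving the two cases $2$-$(2,2,1)_{\Omega(q)}$ and $2$-$(2,2,1)_{W(q)}$. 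For the $3$-design case one checks directly that $3$-$(4,4,1)_{\Omega^+(q)}$ has block count $(q^2+1)(q^3-1+1)\cdots$ — concretely $\prod_{i=2}^{4}(q^{i-1}+1)=(q+1)(q^2+1)(q^3+1)$, which must be compared with $\qnumb{4}{q}(q^3+1)=(q^3+q^2+q+1)(q^3+1)=(q+1)(q^2+1)(q^3+1)$; these agree, so equality holds, and its reduced parameters $2$-$(4,4,q+1)_{\Omega^+(q)}$ inherit equality since the reduction multiplies both the block count and (trivially) leaves the point count unchanged — or rather, the block count is an invariant of the design, so if the $3$-design is symmetric so is every reduction of it. The main obstacle I anticipate is not conceptual but organizational: keeping the three surviving types straight, handling the half-integer $\epsilon=\pm1/2$ for the Hermitian spaces correctly (where $q^{r+\epsilon}$ means $q^r\sqrt q$ and divisibility by $\sqrt q$ must be respected), and making sure no small-$r$ exceptional parameter set is missed — this is best done by tabulating $\lambda_0$ and $\qnumb{r}{Q}$ for all types and all $r\le 4$ (and $r=5$ for $t=3$) by hand and confirming the list matches.
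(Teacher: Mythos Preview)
Your plan is essentially the paper's proof: rewrite $\lambda_0\le\qnumb{r}{Q}$ as an explicit upper bound on $\lambda$, observe monotonicity in the parameters, reduce to $k=r$, cancel the common factor $q^{r+\epsilon}+1$, and then run a case split over $t$ and $\epsilon$; the equality checks you sketch are also the paper's.

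There is one genuine gap in your reduction to $k=r$. Your monotonicity observation (``$\lambda_0$ decreasing in $k$'') only shows that a solution with $k<r$ would force a solution with $k=r$ and the \emph{same} $\lambda$; it does not by itself exclude $k<r$ from the final list. Concretely, once you have found that, say, $2$-$(r,r,1)_{\Omega^+(q)}$ satisfies the inequality, your argument leaves open whether $2$-$(r,r-1,1)_{\Omega^+(q)}$ might also satisfy it. The paper closes this by one explicit computation: for $k=r-1$ and $t\ge 2$ the bound becomes
\[
\lambda \;\le\; \frac{1}{q-1}\cdot\frac{q^{r-t}-1}{q^{r+\epsilon-1}+1}\cdot\frac{1}{(q^{r+\epsilon-t+1}+1)\cdots(q^{r+\epsilon-2}+1)}\;<\;1,
\]
which (together with the monotonicity you already noted) kills all $k<r$ at once. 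Insert this check and the argument is complete.
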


\begin{proof}
The condition \enquote{number of blocks less or equal than number of points} means $\lambda_0 \leq \qbinom{r}{1}{Q}$, which (using $t \geq 1$) is equivalent to
\[
	\lambda \leq \frac{q^k - 1}{q-1}\cdot\frac{(q^{k-1} - 1) \cdots (q^{k-t+1} - 1)}{(q^{r-1} - 1) \cdots (q^{r-t+1} - 1)}\cdot \frac{1}{(q^{r+\epsilon-t+1} + 1) \cdots (q^{r+\epsilon-1} + 1)}
\]
The right-hand side is strictly decreasing in $r$, $\epsilon$, and $t$ and strictly increasing in $k$.
In the case $k = r-1$ we get (using $t \geq 2$)
\[
	\lambda \leq \frac{1}{q-1}\cdot \frac{q^{r-t}-1}{q^{r+\epsilon-1} + 1} \cdot \frac{1}{(q^{r+\epsilon-t+1} + 1) \cdots (q^{r+\epsilon-2} + 1)} < 1\text{,}
\]
such that we can restrict ourselves to the case $k = r$, where the investigated inequality simplifies to
\[
	\lambda \leq \frac{q^r - 1}{q-1} \cdot \frac{1}{(q^{r+\epsilon-t+1} + 1) \cdots (q^{r+\epsilon-1} + 1)}\text{.}
\]
We consider the following cases:
\begin{itemize}
	\item For $t \geq 4$ we have $r \geq 4$ and thus
	\begin{multline*}
		(q^{r+\epsilon-3} + 1)(q^{r+\epsilon-2} + 1)(q^{r+\epsilon-1} + 1) \\
		\geq (q^{0} + 1)\cdot q^{1} \cdot q^{r-2}
		= 2q^{r-1} > \frac{q^r - 1}{q-1}\text{,}
	\end{multline*}
	which implies $\lambda < 1$.
	\item
	In the case $t = 3$ and $\epsilon = -\frac{1}{2}$ (so $r \geq 3$ and $q \geq 4$) we have
	\begin{multline*}
		(q^{r+\epsilon-2} + 1)(q^{r+\epsilon-1} + 1)
		\geq (q^{1/2} + 1)\cdot q^{r - 3/2} \\
		= q^{r - 1} + \sqrt{q}q^{r-2}
		\geq q^{r-1} + 2q^{r-2} > \frac{q^r - 1}{q-1}\text{.}
	\end{multline*}
	\item
	In the case $t = 3$ and $\epsilon \geq 0$ (so $r \geq 3$) we have
	\[
		(q^{r+\epsilon-2} + 1)(q^{r+\epsilon-1} + 1)
		\geq q\cdot q^{r - 1} \\
		= q^r
		> \frac{q^r - 1}{q-1}\text{.}
	\]
	\item
	For $t = 3$ and $\epsilon = -1$ the inequality simplifies to
	\[
		\lambda \leq \frac{q^r - 1}{(q-1)(q^{r-3} + 1)(q^{r-2} + 1)}\text{.}
	\]
	For $r \geq 5$, $(q^{r-3} + 1)(q^{r-2} + 1) > q^2 \cdot q^{r-2} = q^r > \frac{q^r - 1}{q-1}$ and thus $\lambda < 1$.

	For $r = 4$, we get $\lambda \leq \frac{q^4 - 1}{(q-1)(q+1)(q^2+1)} = 1$, and $\lambda = 1$ is an equality case.

	For $r = 3$, we compute $\lambda_{\max} = 1$ and verify that the inequality holds for $\lambda = 1$.
\end{itemize}

	There remain the cases with $t = 2$, where the investigated inequality simplifies further to
	\[
		\lambda \leq \frac{q^r - 1}{(q-1)(q^{r+\epsilon - 1} + 1)} \eqqcolon \bar{\lambda}\text{.}
	\]
	The actual upper bound for $\lambda$ will be $\min(\lfloor \bar{\lambda}\rfloor, \lambda_{\max})$, and there is an equality case
    if and only if $\lambda = \bar{\lambda} \in\mathbb{Z}$.
\begin{itemize}
	\item
	For $\epsilon \geq \frac{1}{2}$ we have
	\[
		(q-1)(q^{r+\epsilon-1} + 1) > (q-1)q^{r+\epsilon-1} \geq q^r > q^r - 1
	\]
	where the \enquote{$\geq$} step follows from $q-1 \geq 1$ for $\epsilon = 1$ and from $q-1 \geq \sqrt{q}$ for $\epsilon = \frac{1}{2}$ (where we have $q \geq 4$).
	So $\bar{\lambda} < 1$.
	\item For $\epsilon = 0$, we get
	\[
		\bar{\lambda} = \frac{q^r - 1}{(q-1)(q^{r-1} + 1)} \leq \frac{q^{r-1} + q^{r-2} + \ldots + 1}{q^{r-1} + 1} = 1 + \frac{q^{r-2} + \ldots + q}{q^{r-1} + 1}\text{.}
	\]
	For $r = 2$, $\bar{\lambda} = 1$ (and we get the equality case $\lambda = 1$), and for $r \geq 3$, $1 < \bar{\lambda} < 2$.
	\item
	For $\epsilon = -\frac{1}{2}$ and $r = 2$ we have $\lambda_{\max} = 1$ and $\bar{\lambda} = \frac{q^2 - 1}{(q-1)(\sqrt{q} + 1)} = \frac{q+1}{\sqrt{q} + 1} > 1$.
	\item
	For $\epsilon = -\frac{1}{2}$ and $r \geq 3$,
	\begin{multline*}
		\bar{\lambda}
		= \frac{q^r - 1}{(q-1)(q^{r-3/2} + 1)} \\
		= \frac{\sqrt{q}^{2r} + \sqrt{q}^{2r-2} + \ldots + 1}{\sqrt{q}^{2r-3} + 1}
		= \sqrt{q} + \frac{(\sqrt{q}^{2r - 4} + \ldots + 1) - \sqrt{q}}{\sqrt{q}^{2r - 3} + 1}
	\end{multline*}
	is strictly between $\sqrt{q}$ and $\sqrt{q} + 1$.
	Hence $\lambda \leq \sqrt{q}$.
	Furthermore, note that $\lambda_{\max} = (q^{1/2} + 1) \cdots (q^{r-5/2} + 1) > \sqrt{q} = \bar{\lambda}$.
	\item
	For $\epsilon = -1$ and $r = 2$ we have $\bar{\lambda} = \frac{q^r-1}{q-1} > 1$ and $\lambda_{\max} = 1$.
	\item For $\epsilon = -1$ and $r \geq 3$,
	\[
		\bar{\lambda}
		= \frac{q^{r-1} + q^{r-2} + \ldots + 1}{q^{r-2} + 1}
		= q + \frac{(q^{r-2} + \ldots + q^2) + 1}{q^{r-2} + 1}\text{.}
	\]
	For $r = 3$, this shows $q < \bar{\lambda} < q+1$.
	For $r \geq 4$, we compute further
	\[
		\bar{\lambda} = q + 1 + \frac{q^{r-3} + \ldots + q^2}{q^{r-2} + 1}\text{.}
	\]
	For $r = 4$, we get the equality case $\bar{\lambda} = q+1$, and for $r \geq 5$ we have $q+1 < \bar{\lambda} < q+2$.

	Furthermore, we note that $\lambda_{\max} = 2$ for $r = 3$ and $\lambda_{\max} = 2(q + 1) \cdots (q^{r-2} + 1) > q+1$ for $r \geq 4$.
\end{itemize}
\end{proof}

\begin{remark}
	The equality cases in Lemma~\ref{lem:fisher_admissibility} show that symmetric designs in polar spaces can only exist in a few sporadic cases.
	\begin{itemize}
		\item The respective complete designs are the unique designs with parameters $2$-$(2,2,1)_{\Omega(q)}$ and $2$-$(2,2,1)_{W(q)}$.
		\item The respective Latin-Greek design is the unique symmetric design with the parameters $3$-$(4,4,1)_{\Omega^+(q)}$, see Lemma~\ref{lem:latin_greek_unique}.
		\item Its reduced design is symmetric with the parameters $2$-$(4,4,q+1)_{\Omega^+(q)}$.
		We would like to point out that this design is not necessarily unique, as we found symmetric $2$-$(4,4,3)_{\Omega^+(2)}$ designs whose
        block intersection vectors differ from the one of the Latin-Greek halving.
	\end{itemize}

	Furthermore, Lemma~\ref{lem:fisher_admissibility} lists candidates for the parameters of designs in polar spaces for which inequality~\eqref{eq:fisher_ineq} holds.
	We see that such designs can only exist for types $\Omega^+(q)$ and $U^+(q)$.
	Known examples are the complete $2$-$(2,2,1)_{\Omega^+(q)}$ and the Latin-Greek $2$-$(3,3,1)_{\Omega^+(q)}$ designs.
\end{remark}

The common textbook proof of Fisher's inequality for classical and subspace designs uses Bose's elegant argument based on the rank
of the point-block incidence matrix $N$ \cite{Bose1949}.
We follow this approach for our case of designs in polar spaces.

For a combinatorial $2$-$(v,k,\lambda)$ design (or a subspace design) it is easy to see that
\[
    NN^\top = (\lambda_1-\lambda)I + \lambda J\,,
\]
where $J$ is the all-one matrix.
Now, let $D$ be a $2$-$(r,k,\lambda)_Q$ design and
let $N$ be the point-block incidence matrix of $D$.
Then for two points $x,y\in\qbinom{\PS}{1}{Q}$ we have
\[
    (NN^\top)_{xy}
    = \begin{cases}
        \lambda_1 & \text{for } x = y\text{;}                   \\
        \lambda   & \text{if } x\text{ and }y\text{ collinear;} \\
        0         & \text{otherwise.}
    \end{cases}
\]
It follows that
\[
    NN^\top = \lambda_1 I + \lambda\cdot A\,,
\]
where $A$ is the adjacency matrix of the collinearity graph of the polar space.

The equation for $NN^\top=\lambda_1 I + \lambda\cdot A$ for designs in polar spaces
is a straightforward generalization of the equation
$NN^\top = (\lambda_1 -\lambda)I + \lambda J$ for subspace designs, since
$J-I$ can be regarded as the adjacency matrix of the collinearity graph, i.e.~the complete graph, in PG$(V,q)$.

\begin{lemma}\label{lem:rk_N}
Let $D$ be a $t$-$(r,k,\lambda)_Q$ design of strength $t \geq 2$.
Then the rank of the point-block incidence matrix $N$ of $D$ is
\[
	\rank(N) = \begin{cases}
		1 + m_1 & \text{for }k=r\text{;}\\
		\qbinom{r}{1}{Q} & \text{for }k < r\text{.}
	\end{cases}
\]
\end{lemma}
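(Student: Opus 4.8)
The plan is to replace $N$ by the real symmetric matrix $NN^\top$, so that $\rank(N)=\rank(NN^\top)$, and to read the rank off the spectrum. Since $D$ has strength $t\geq 2$, Lemma~\ref{lem:reduced} (applied with $s=2$) shows that $D$ is in particular a $2$-$(r,k,\lambda_2)_Q$ design, so the computation preceding the statement applies to $D$ and yields $NN^\top=\lambda_1 I+\lambda_2 A$, where $A$ is the adjacency matrix of the collinearity graph of $\mathcal{Q}$. I would then invoke the spectral data of this strongly regular graph recorded in Section~\ref{sec:prelim}: $A$ has eigenvalues $\theta_0,\theta_1,\theta_2$ with multiplicities $m_0=1$, $m_1$, $m_2$, and $1+m_1+m_2=\qbinom{r}{1}{Q}$. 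Consequently $NN^\top$ is diagonalizable with eigenvalues $\lambda_1+\lambda_2\theta_j$ of the same multiplicities, and $\rank(N)$ is the sum of those $m_j$ for which $\lambda_1+\lambda_2\theta_j\neq 0$.

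It then remains to decide which of the three numbers $\lambda_1+\lambda_2\theta_j$ vanish; we may assume $D\neq\emptyset$, so that $\lambda_1,\lambda_2>0$. Since $\theta_0=q\,\qbinom{r-1}{1}{Q}>0$ and $\theta_1=q^{r-1}-1\geq 0$ (as $r\geq t\geq 2$), the eigenvalues for $j=0$ and $j=1$ are positive and always contribute $m_0+m_1$ to the rank. The decisive one is $j=2$: I would rewrite $\theta_2=-(q^{r+\epsilon-1}+1)=-\qbinom{r-1}{1}{Q}/\qnumb{r-1}{q}$ and combine it with the instance $s=1$ of Lemma~\ref{lem:reduced} applied to the $2$-design $D$, which reads $\lambda_1=\lambda_2\,\qbinom{r-1}{1}{Q}/\qbinom{k-1}{1}{q}$, equivalently $\lambda_1\,\qnumb{k-1}{q}=\lambda_2\,\qbinom{r-1}{1}{Q}$ (this last relation also follows by double counting, for a fixed point $P$, the pairs $(L,B)$ with $P\subseteq L\subseteq B$, $\dim L=2$ and $B\in D$). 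Substituting, one finds
\[
    \lambda_1+\lambda_2\theta_2
    =\lambda_2\,(q^{r+\epsilon-1}+1)\,\frac{\qnumb{r-1}{q}-\qnumb{k-1}{q}}{\qnumb{k-1}{q}},
\]
which is $0$ for $k=r$ and strictly positive for $k<r$, because $\qnumb{\cdot}{q}$ is strictly increasing and $\qnumb{k-1}{q}>0$ (since $k\geq t\geq 2$).

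Combining these observations finishes the proof: for $k=r$ the kernel of $NN^\top$ is exactly the $m_2$-dimensional $\theta_2$-eigenspace of $A$, so $\rank(N)=m_0+m_1=1+m_1$; for $k<r$ all three eigenvalues of $NN^\top$ are positive, $NN^\top$ is nonsingular, and $\rank(N)=m_0+m_1+m_2=\qbinom{r}{1}{Q}$. I do not expect a genuine obstacle here; the only real content is the bookkeeping identity $\lambda_1\,\qnumb{k-1}{q}=\lambda_2\,\qbinom{r-1}{1}{Q}$ together with the identification $\theta_2=-\qbinom{r-1}{1}{Q}/\qnumb{r-1}{q}$, after which the sign of $\lambda_1+\lambda_2\theta_2$, and hence the case distinction $k=r$ versus $k<r$, is immediate.
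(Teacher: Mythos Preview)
Your proof is correct and follows essentially the same approach as the paper: compute the spectrum of $NN^\top=\lambda_1 I+\lambda_2 A$ from the eigenvalues of the collinearity graph and determine which eigenvalues vanish. In fact you are more careful than the paper in two respects: you use $\lambda_2$ rather than the ambiguous $\lambda$ when $t>2$, and you actually carry out the ``simple calculation'' the paper alludes to, arriving at the clean expression $\lambda_1+\lambda_2\theta_2=\lambda_2\,(q^{r+\epsilon-1}+1)\,(\qnumb{r-1}{q}-\qnumb{k-1}{q})/\qnumb{k-1}{q}$, from which the case distinction $k=r$ versus $k<r$ is immediate.
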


\begin{proof}
The eigenvalues $\mu_i$ of $NN^\top$ can be derived from the eigenvalues $\theta_i$ of $A$ as
\[
    \mu_i = \lambda_1+\lambda\theta_i
\]
with multiplicities $m_i$, $i=0,1,2$, see also \cite{Bailey2022}.
Since $\lambda, \lambda_1>0$ also $\mu_0, \mu_1>0$
and a simple calculation shows that
$\mu_2=0$ if and only if $k=r$, independent from $\lambda$.
In this case, the rank of the matrices $NN^\top$ and hence $N$ over the real numbers is equal to $1+m_1$.
In all other cases, the matrix $NN^\top$ and hence $N$ has full rank.
\end{proof}

\begin{theorem}[Fisher's inequality for designs in polar spaces]\label{thm:fisher}~\linebreak
    Let $D$ be a $t$-$(r,k,\lambda)_{Q}$ design of strength $t\geq2$. If $k<r$, then $\lambda_0 \geq \qbinom{r}{1}{Q}$, i.e.~the number of
    blocks is greater or equal than the number of points in the polar space. If $k=r$, then
    $\lambda_0 \geq 1 + m_1$.
\end{theorem}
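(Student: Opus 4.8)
The plan is to run Bose's classical rank argument in this setting. Let $N$ be the point--block incidence matrix of $D$: a $\qbinom{r}{1}{Q}\times\lambda_0$ matrix whose rows are indexed by the points of $\mathcal{Q}$ and whose columns are indexed by the blocks. Since the rank of a matrix never exceeds its number of columns, we have $\rank(N)\leq\lambda_0$, so it suffices to bound $\rank(N)$ from below. But that lower bound is exactly the content of Lemma~\ref{lem:rk_N}: for $k<r$ it gives $\rank(N)=\qbinom{r}{1}{Q}$, hence $\lambda_0\geq\qbinom{r}{1}{Q}$; and for $k=r$ it gives $\rank(N)=1+m_1$, hence $\lambda_0\geq 1+m_1$. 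This is the whole proof.

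The one thing to check is that Lemma~\ref{lem:rk_N} is actually applicable, i.e. that we are dealing with a nontrivial design with $\lambda\geq 1$ (for $\lambda=0$ the design is empty and the asserted inequality fails trivially). Under $\lambda\geq 1$, Lemma~\ref{lem:reduced} forces $\lambda_1\geq 1$ as well, so in the identity $NN^\top=\lambda_1 I+\lambda A$ the eigenvalues $\mu_i=\lambda_1+\lambda\theta_i$ (with the $\theta_i$ the eigenvalues of the strongly regular collinearity graph) satisfy $\mu_0,\mu_1>0$, while $\mu_2=\lambda_1+\lambda\theta_2$ vanishes precisely when $k=r$. Over $\mathbb{R}$ this pins down $\rank(N)=\rank(NN^\top)$ to be $1+m_1$ when $k=r$ and $\qbinom{r}{1}{Q}$ otherwise, which is what Lemma~\ref{lem:rk_N} records.

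I expect no real obstacle here once Lemma~\ref{lem:rk_N} is in hand; the substantive work is the eigenvalue analysis of the collinearity graph carried out there, and in particular the observation — special to the polar-space situation — that $NN^\top$ is controlled by the adjacency matrix $A$ of the collinearity graph rather than by $J-I$ (as happens for subspace designs, where the collinearity graph is complete). The remaining step, the bound $\lambda_0\geq\rank(N)$ from counting columns of $N$, is immediate.
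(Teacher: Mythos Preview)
Your argument is correct and follows exactly the same route as the paper: bound the number of columns of the point--block incidence matrix $N$ below by $\rank(N)$, then invoke Lemma~\ref{lem:rk_N} to compute that rank in the two cases $k<r$ and $k=r$. Your added remark that one implicitly needs $\lambda\geq 1$ for Lemma~\ref{lem:rk_N} to apply is a fair caveat the paper leaves unstated, but otherwise the proofs are identical.
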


\begin{proof}
    Let $N$ be the point-block incidence matrix of $D$.
    Since $N$ is a $\qbinom{r}{1}{Q} \times \lambda_0$ matrix, $\rank N \leq \lambda_0$.
    If $k < r$, by Lemma~\ref{lem:rk_N} the matrix $N$ has full rank, i.e.~$\rank N = \qbinom{r}{1}{Q}$.
    If $r=k$, $\rank N = 1+m_1$.
\end{proof}

For $k < r$, Fisher's inequality follows already from Lemma~\ref{lem:fisher_admissibility}.
However, for $k = r$, Fisher's inequality~\ref{thm:fisher} sharpens Lemma~\ref{lem:fisher_admissibility}, which leads to the following classification result.

\begin{theorem}\label{thm:steiner_omegaplus}
	Let $D$ be a $t$-$(r,r,1)_{\Omega^+(q)}$ Steiner system of strength $t \geq 2$.
	Then $D$ is either a complete or a Latin-Greek design.
\end{theorem}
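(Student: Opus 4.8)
The plan is to reduce to the strength-$2$ case by iterated derived designs, then apply Fisher's inequality and the uniqueness of Latin--Greek halvings. First dispose of the boundary case $t=r$: the only $r$-dimensional subspace of $\mathcal{Q}$ contained in a generator $B$ is $B$ itself, so the Steiner condition forces every generator to be a block, i.e.\ $D$ is the complete design. From now on assume $2\le t\le r-1$. The derived design of an $\Omega^+(q)$ polar space in a point is again a polar space of type $\Omega^+(q)$ with rank and block dimension both decreased by one, so repeatedly applying Theorem~\ref{thm:derived} (once for each of $t-2$ nested points; the strength stays $\ge 2$ and the rank stays $\ge 1$ throughout, since $t\le r$) yields a $2$-$(\rho,\rho,1)_{\Omega^+(q)}$ Steiner system $D'$ with $\rho=r-t+2\ge 2$ (for $t=2$ this is just $D'=D$, $\rho=r$).

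The crucial step is to show $\rho\le 3$. By Lemma~\ref{lem:reduced} (with $s=0$) together with Lemma~\ref{lem:count}, and using $\epsilon=-1$, the number of blocks of $D'$ is $\lambda_0=(q^{\rho-2}+1)(q^{\rho-1}+1)$, while the multiplicity $m_1$ attached to the collinearity graph of the rank-$\rho$ hyperbolic polar space (as computed in Section~\ref{sec:prelim}) equals $q(q^{\rho-2}+1)(q^{\rho}-1)/(q^2-1)$. A short computation reduces $\lambda_0<m_1$ to the inequality $q^{2}+q-1<q^{\rho-1}$, which holds for every prime power $q\ge 2$ as soon as $\rho\ge 4$ (indeed $q^{3}-(q^{2}+q-1)=(q-1)^{2}(q+1)>0$). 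Hence for $\rho\ge 4$ we would get $\lambda_0<m_1<1+m_1$, contradicting Fisher's inequality in the case $k=r$ (Theorem~\ref{thm:fisher}). Therefore $\rho\in\{2,3\}$, and $\rho=2$ is excluded because it forces $t=r$, contrary to our standing assumption; so $\rho=3$, that is, $t=r-1$.

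It then remains to observe that a $(r-1)$-$(r,r,1)_{\Omega^+(q)}$ Steiner system is a Latin--Greek halving by Lemma~\ref{lem:latin_greek_unique}, which completes the argument: $D$ is the complete design when $t=r$ and a Latin--Greek design when $t=r-1$, and no Steiner system exists for $2\le t\le r-2$. I expect the only genuine content to be the inequality $\lambda_0<m_1$ for $\rho\ge 4$ and the verification that the iterated derivation stays within the hypotheses of Theorem~\ref{thm:derived}; everything else follows directly from results already established.
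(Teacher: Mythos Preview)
Your proof is correct and follows essentially the same route as the paper: dispose of $t=r$ trivially, reduce to strength $2$ via iterated derived designs, then use Fisher's inequality (Theorem~\ref{thm:fisher}) to force the reduced rank to be at most $3$, and finally invoke Lemma~\ref{lem:latin_greek_unique} for the case $t=r-1$. The only differences are organizational: the paper treats the cases $t=r$, $t=r-1$, and $t\le r-2$ separately from the outset and phrases the key inequality as $m_1/\lambda_0>1$, whereas you fold $t=r-1$ into the derivation step and rewrite the same inequality as $q^2+q-1<q^{\rho-1}$.
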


\begin{proof}
	If $t = r$, $D$ is a complete design, and if $t = r-1$, $D$ is a Latin-Greek design by Lemma~\ref{lem:latin_greek_unique}.

	For $t \leq r - 2$, by Theorem~\ref{thm:derived} the $(t-2)$-fold derived design of $D$ is a Steiner system with the parameters
    $2$-$(r',r',1)_{\Omega^+(q)}$ where $r' = r-t+2 \geq 4$, such that we may assume $t = 2$ and $r \geq 4$.
	Comparing
	\[
		m_1 = \frac{q(q^{r-2}+1)(q^r - 1)}{q^2 - 1}
		\quad\text{and}\quad
		\lambda_0 = (q^{r-2} + 1)(q^{r-1} + 1)\text{.}
	\]
	we get
	\[
		\frac{m_1}{\lambda_0}
		= \frac{q(q^r - 1)}{q^{r-1} + 1)(q^2 - 1)}
		= \frac{q^{r+1} - q}{q^{r+1} - q^{r-1} + q^2 - 1}
		> 1\text{,}
	\]
	where in the last step we used $r \geq 4$.
	Hence $m_1 > \lambda_0$, which is a contradiction to Fisher's inequality~\ref{thm:fisher}.
\end{proof}

An important part of our proof of Theorem~\ref{thm:steiner_omegaplus} is to show the non-existence of $t$-$(r,r,1)_{\Omega^+(q)}$ designs with $t \geq 2$ and $r \geq 4$.
This result follows also from non-existence stated in Theorem~\ref{thm:schmidtweiss} below, which has been proven by Schmidt and Wei\ss{} using
the machinery of association schemes.
Our argument based on Fisher's inequality provides an elementary proof of that special case.

\begin{theorem}[Schmidt, Wei\ss{} \cite{Schmidt-Weiss-2022}]\label{thm:schmidtweiss}
    Suppose there exists a
    Steiner system $t$-$(r, r, 1)_Q$ with $t\in\{2,\ldots,r-1\}$.
    Then one of the following holds

    \begin{enumerate}[(a)]
        \item $t=2$ and $r$ odd and $Q$ is either $U(q)$ or $\Omega^{-}(q)$,
	\item $t = r-1$ and $q\neq 2$ and $Q$ is either $U(q)$ or $\Omega^{-}(q)$,
        \item $t=r-1$ and $Q = \Omega^+(q)$.
    \end{enumerate}
\end{theorem}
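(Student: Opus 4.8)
This result is due to Schmidt and Wei\ss{} \cite{Schmidt-Weiss-2022}, who prove it with the machinery of association schemes; here is the line of argument one would follow. Work in the \emph{dual polar graph} $\Gamma$ of $\mathcal{Q}$, whose vertices are the generators of $\mathcal{Q}$ and in which two generators lie at distance $i$ exactly when they meet in dimension $r-i$. This $\Gamma$ is a $P$- and $Q$-polynomial distance-regular graph of diameter $r$, with eigenvalues $\theta_j=q^{1+\epsilon}\qnumb{r-j}{q}-\qnumb{j}{q}$ for $j=0,\ldots,r$ and known multiplicities $m_0,\ldots,m_r$; write $E_0,\ldots,E_r$ for the corresponding eigenspaces. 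The first step is to observe that a $t$-$(r,r,1)_Q$ Steiner system $D$ is at the same time a \emph{code} and a \emph{design} in $\Gamma$. By the Steiner property, two distinct blocks cannot contain a common $t$-subspace, hence meet in dimension at most $t-1$ and lie at distance at least $r-t+1$ in $\Gamma$; so the minimum distance of $D$ satisfies $\delta\geq r-t+1$. On the other hand, identifying designs in polar spaces with Delsarte designs in the dual polar scheme --- concretely, the row space of the incidence matrix between generators and $t$-subspaces is $E_0\oplus\cdots\oplus E_t$, see \cite{vanhove2011} --- the covering condition gives $\chi_D\in E_0\oplus E_{t+1}\oplus\cdots\oplus E_r$, so $D$ is a Delsarte $t$-design and its dual distance satisfies $\delta'\geq t+1$. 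Because the block count $\prod_{i=r-t+1}^{r}(q^{i+\epsilon}+1)$ is strictly increasing in $t$, a fixed set of generators can be a Steiner system for only one strength, so in fact $\delta=r-t+1$ and $\delta'=t+1$, whence $\delta+\delta'=r+2$.

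Now $\delta+\delta'\leq d+2$ holds for every code in a $P$- and $Q$-polynomial scheme of diameter $d$, with equality characterising the \enquote{MDS} configurations, and $D$ attains it. The plan is to exploit this extremal position. Equality in the governing Delsarte linear programming bound forces the inner distribution $(a_0,a_{r-t+1},\ldots,a_r)$ of $D$ to be \emph{uniquely determined} by $q,r,\epsilon,t$ --- one can in fact read it off from the K\"ohler equations (Theorem~\ref{thm:koehler}) applied to a block $S\in D$, using $\alpha_t=\cdots=\alpha_{r-1}=0$ and $\alpha_r=1$ --- and, more usefully, forces the dual quantities (the squared norms $\|E_j\chi_D\|^2$, the outer-distribution numbers, the Krein-type products, all of which become explicit functions of $q,r,\epsilon,t$ once the inner distribution is fixed) to be non-negative integers. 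Substituting the formulas for $\theta_j$ and $m_j$ turns these requirements into a finite system of Diophantine conditions on $q,r,\epsilon$. For the two extreme strengths this can be stated very concretely: a $(r-1)$-$(r,r,1)_Q$ design is a coclique of $\Gamma$ of size $\qbinom{r}{r}{Q}/(q^{1+\epsilon}+1)$, which is exactly the Hoffman ratio bound, so $\chi_D\in E_0\oplus E_r$ and $D$ is an equitable \enquote{intriguing set}; and a $2$-$(r,r,1)_Q$ design is a code using at most the three distances $0,r-1,r$, whose weight distribution is pinned down likewise. Iterating the derived design (Theorem~\ref{thm:derived}) turns a $t$-$(r,r,1)_Q$ system into a $2$-$(r-t+2,r-t+2,1)_Q$ system of the same type, which helps with bookkeeping but does not yield the classification by itself, so the extremality argument must be run at general $t$.

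What remains is to solve the Diophantine system, and this is where I expect the real effort to lie. One should find that for $\epsilon=0$ (the symplectic and parabolic types, which are \emph{not} ruled out by the Fisher-type inequality of Theorem~\ref{thm:fisher}) and for the Hermitian type $U^+(q)$ there is no solution once $r\geq 3$ and $2\leq t\leq r-1$; that for $t=2$ the only survivors are $U(q)$ and $\Omega^-(q)$ with $r$ odd, the parity constraint on $r$ arising from one of the integrality conditions on a multiplicity; and that for $t=r-1$ the conditions are satisfiable for $\Omega^+(q)$ for every $q$ --- this is the Latin--Greek coclique of Lemma~\ref{lem:latin_greek_unique} --- but for $U(q)$ and $\Omega^-(q)$ only for $q\neq 2$, the value $q=2$ failing a positivity check on the otherwise-determined inner distribution. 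The principal obstacle is thus twofold: first, to justify rigorously that the code/design duality forces equality in the \emph{correct} scheme-theoretic bound and to isolate the \emph{sharp} list of integrality and positivity constraints that this equality imposes --- this is precisely the point where the apparatus of $Q$-polynomial association schemes (Delsarte's LP bound, the Krein parameters, the dual eigenvalues of the dual polar scheme) is indispensable and where the argument leaves the purely elementary counting used elsewhere in this paper; and second, the delicate but entirely finite number theory that then singles out the surviving parameter families.
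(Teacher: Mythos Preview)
The paper does not contain a proof of this theorem. Theorem~\ref{thm:schmidtweiss} is stated with attribution to Schmidt and Wei\ss{}~\cite{Schmidt-Weiss-2022} and is immediately followed not by a proof environment but by the remark that it ``has been proven by Schmidt and Wei\ss{} using the machinery of association schemes'' and that the paper's own Fisher-type argument (Theorem~\ref{thm:fisher}) merely recovers the special case $Q=\Omega^+(q)$ by elementary means. There is therefore no ``paper's own proof'' to compare your proposal against.

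Your sketch correctly identifies that the result is external and outlines the dual-polar-scheme approach (Steiner system as a simultaneous code and Delsarte design, the tight inequality $\delta+\delta'=r+2$, and the resulting Diophantine constraints on the inner/dual distributions). As a high-level summary of the strategy in~\cite{Schmidt-Weiss-2022} this is reasonable, but it is explicitly a plan rather than a proof: the steps ``equality forces the inner distribution to be uniquely determined'' and ``solve the Diophantine system'' are asserted, not carried out, and you yourself flag the second as where the real work lies. If the intent was to reproduce the Schmidt--Wei\ss{} argument, the actual case analysis and the precise integrality/positivity conditions that eliminate each type would need to be written down; if the intent was only to indicate provenance and method, a one-line citation (as the paper does) suffices.
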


For other non-existence results see \cite{cossidente2021regular}.
As $t$-$(r,r,\lambda)_Q$ designs are $t$-designs in association schemes (compare~\cite{vanhove2011}, \cite[Prop~2.2]{cossidente2021regular}), Delsarte's general theory can be applied and gives the following stronger version of Fisher's inequality.

\begin{theorem}[{{Delsarte \cite[Thm 5.19]{Delsarte-1973}}}]
    For every $t$-$(r,r,\lambda)_Q$ design $D$ we have
\[
    \# D \geq \sum_{i=0}^{\lfloor t/2\rfloor} \mu_i,
\]
where $\mu_i$ is the $i$-th multiplicity of the corresponding association scheme.
\end{theorem}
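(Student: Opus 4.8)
The plan is to recognise this inequality as the classical Delsarte linear‑programming bound for $Q$‑polynomial (cometric) association schemes, applied to the dual polar scheme on the generators of $\PS$; since we are in the case $k=r$, the blocks of $D$ form a subset of the vertex set $X=\qbinom{\PS}{r}{Q}$ of this scheme. Its Bose--Mesner algebra has primitive idempotents $E_0,E_1,\dots,E_r$ projecting onto eigenspaces $V_0,\dots,V_r$, with $\dim V_i=\mu_i$ and $V_0=\langle\mathbf 1\rangle$, and the dual polar graph is $Q$‑polynomial with respect to this ordering. By the identification already used above (cf.\ Vanhove~\cite{vanhove2011} and \cite[Prop.~2.2]{cossidente2021regular}), a $t$‑$(r,r,\lambda)_Q$ design is precisely a Delsarte $T$‑design with $T=\{1,\dots,t\}$ in this scheme; equivalently, its characteristic vector $\chi\in\mathbb{R}^{X}$ satisfies $E_i\chi=0$, i.e.\ $\chi\perp V_i$, for all $i\in\{1,\dots,t\}$. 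One may assume $D\neq\emptyset$.

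Next I would set $e=\lfloor t/2\rfloor$ and $W=V_0\oplus V_1\oplus\dots\oplus V_e$, so that $\dim W=\sum_{i=0}^{e}\mu_i$. The one structural input beyond the design condition is the Krein/cometric property of a $Q$‑polynomial ordering: the Hadamard (entrywise) product of eigenspaces obeys $V_i\circ V_j\subseteq V_0\oplus\dots\oplus V_{i+j}$. Hence $W\circ W\subseteq V_0\oplus\dots\oplus V_{2e}$, and since $2e\le t$ this lies in the span of $V_0,V_1,\dots,V_t$, on which the pairing with $\chi$ sees only the $V_0$‑component.

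I would then prove that the coordinate‑restriction map $\rho\colon W\to\mathbb{R}^{D}$, $w\mapsto (w(x))_{x\in D}$, is injective, which immediately gives $\#D=\dim\mathbb{R}^{D}\ge\dim W=\sum_{i=0}^{\lfloor t/2\rfloor}\mu_i$. So suppose $w\in W$ with $\rho(w)=0$, i.e.\ $w$ vanishes on $D$. On the one hand $\langle\chi,\,w\circ w\rangle=\sum_{x\in D}w(x)^2=0$. On the other hand, write $w\circ w=c\,\mathbf 1+u$ with $u\in V_1\oplus\dots\oplus V_{2e}$ and $c=\langle w\circ w,\mathbf 1\rangle/\#X=\lVert w\rVert^{2}/\#X\ge 0$; since $\chi\perp V_i$ for $1\le i\le t$ and $2e\le t$, the pairing collapses to $\langle\chi,w\circ w\rangle=c\cdot\langle\chi,\mathbf 1\rangle=c\cdot\#D$. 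Equating the two evaluations yields $\lVert w\rVert^{2}\cdot\#D=0$, hence $w=0$ because $\#D\ge 1$, proving injectivity.

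The genuinely delicate point is not this short piece of linear algebra, which goes through verbatim as in the coding‑theoretic setting, but the translation underlying the first paragraph: that a $\lambda$‑regular system with respect to $(t-1)$‑spaces is exactly a Delsarte design of strength $t$ in the dual polar scheme equipped with its natural $Q$‑polynomial ordering, and that the quantities $\mu_i$ in the statement are the multiplicities of that ordering. A self‑contained treatment would have to develop the Bose--Mesner algebra of the dual polar scheme and verify its cometric property; within the present paper it is legitimate to invoke \cite{vanhove2011}, \cite[Prop.~2.2]{cossidente2021regular} and \cite[Thm.~5.19]{Delsarte-1973} for this, which is why the statement is quoted rather than reproved.
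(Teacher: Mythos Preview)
The paper does not give its own proof of this statement: it is quoted as Delsarte's theorem with a citation to \cite[Thm~5.19]{Delsarte-1973}, and the surrounding text only adds that $t$-$(r,r,\lambda)_Q$ designs are $t$-designs in the dual polar association scheme (citing \cite{vanhove2011} and \cite[Prop.~2.2]{cossidente2021regular}) so that Delsarte's general bound applies. There is therefore nothing to compare against beyond this invocation.

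Your proposal is a correct and self-contained reconstruction of the standard proof of Delsarte's Fisher-type inequality in a $Q$-polynomial scheme: the cometric property gives $W\circ W\subseteq V_0\oplus\cdots\oplus V_{2e}$ with $2e\le t$, and the injectivity of the restriction $W\to\mathbb{R}^D$ follows from the computation $0=\langle\chi,w\circ w\rangle=(\lVert w\rVert^2/\#X)\cdot\#D$. You correctly identify the two external inputs that the paper also relies on by citation rather than proof, namely that the dual polar scheme is $Q$-polynomial and that a $t$-$(r,r,\lambda)_Q$ design is a Delsarte $T$-design with $T=\{1,\dots,t\}$ there. In that sense your write-up goes further than the paper, which simply quotes the result.
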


For polar spaces, the multiplicities $\mu_i$ have been determined by
Stanton~\cite[Cor. 5.5]{Stanton-1980}.
For $t=2,3$, Delsarte's theorem gives $\# D \geq 1 + \mu_1$ and $\mu_1$ equals the multiplicity $m_1$ of one of the eigenvalues
of the collinearity graph in Theorem \ref{thm:fisher}.

K.-U.~Schmidt was confident that Delsarte's inequality for designs in polar spaces could be extended to
$k < r$, too. Moreover, Delsarte's inequality is a special case of the linear programming bound
for designs in association schemes, and he conjectured that the linear programming bound
allows to extend the non-existence
results in Theorem~\ref{thm:schmidtweiss} from $\lambda=1$ to something like
$\lambda\leq q^{\binom{t}{2}}$.
This conjecture is supported by
our computational results in Section \ref{sec:results}. Unfortunately, after his tragic, untimely
death his latest results in this regard seem to be lost.

\section{Computational construction of designs}\label{sec:compute}

For the construction of designs we use the well-known approach
to choose a group $G$ and search for $G$-invariant designs.
Thus, instead of searching for individual $k$-subspaces
the design has to be composed of $G$-orbits on the $k$-subspaces.
This approach applied to combinatorial designs is commonly attributed to
Kramer and Mesner \cite{Kramer-Mesner-1976-DM15[3]:263-296} and has been used there with great success.

For a subgroup $G$ of the collineation group of a polar space $\PS$, the set of blocks of a
$G$-invariant $t$-$(r,k,\lambda)_Q$ design ($G\le \Aut(\PS)$) is the disjoint union of
orbits of $G$ on the set $\qbinom{\PS}{k}{Q}$ of $k$-dimensional subspaces of $\PS$.
To obtain an appropriate selection of orbits of $G$ on $\qbinom{\PS}{k}{Q}$ we consider the incidence matrix $A_{t,k}^G$
whose rows are indexed by the $G$-orbits on the set of $t$-subspaces of $\PS$ and whose columns are indexed by the
orbits on $k$-subspaces. The entry  $a_{T,K}^G$ of $A_{t,k}^G$ corresponding to the orbits $T^G$ and $K^G$ is defined by
\[
    a_{T,K}^G:=\#\{K'\in K^G\mid T\le K'\}.
\]
Now the theorem by Kramer and Mesner, originally formulated for combinatorial designs, can be applied to
designs in polar spaces as well:
\begin{theorem}[{Kramer, Mesner \cite{Kramer-Mesner-1976-DM15[3]:263-296}}]\label{thm:KramerMesner}
    Let $\PS$ be a polar space and $G$ be a subgroup of its collineation group.
    There exists a $G$-invariant $t\text{-}(r,k,\lambda)_Q$ design
    if and only if there is a $0/1$-vector $x$ satisfying
    \begin{equation}
        A_{t,k}^G \cdot x =
        \left(\begin{array}{c}
                \lambda \\
                \vdots  \\
                \lambda
            \end{array}\right)\text{.}
        \label{eq:KM}
    \end{equation}
\end{theorem}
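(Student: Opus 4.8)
The plan is to translate the standard orbit-counting argument for the classical Kramer–Mesner theorem into the setting of polar spaces, the key point being that the collineation group $G \le \Aut(\PS)$ acts on the subspace poset $\bigsqcup_k \qbinom{\PS}{k}{Q}$ in a way that respects incidence, so orbits of $t$-subspaces and orbits of $k$-subspaces behave exactly as in the classical case. First I would fix the notation: let $T_1^G,\ldots,T_a^G$ be the $G$-orbits on $\qbinom{\PS}{t}{Q}$ and $K_1^G,\ldots,K_b^G$ those on $\qbinom{\PS}{k}{Q}$, and let $A_{t,k}^G = (a_{i,j})$ with $a_{i,j} = a_{T_i,K_j}^G = \#\{K' \in K_j^G \mid T_i \le K'\}$. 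A $G$-invariant set $D \subseteq \qbinom{\PS}{k}{Q}$ of blocks is necessarily a union of some of the orbits $K_j^G$, and conversely any such union is $G$-invariant; encode the choice of orbits by the $0/1$-vector $x = (x_1,\ldots,x_b)$ with $x_j = 1$ iff $K_j^G \subseteq D$.

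The crucial lemma is that the number $a_{i,j}$ does not depend on the choice of representative $T_i$ within its orbit: if $T, T' \in T_i^G$ with $T' = g(T)$ for some $g \in G$, then $K' \mapsto g(K')$ is a bijection from $\{K' \in K_j^G : T \le K'\}$ to $\{K' \in K_j^G : T' \le K'\}$, because $g$ preserves both incidence and the orbit $K_j^G$. Hence for \emph{any} $t$-subspace $T$ in orbit $T_i^G$, the number of blocks of $D = \bigcup_{x_j = 1} K_j^G$ containing $T$ equals $\sum_{j=1}^b a_{i,j} x_j = (A_{t,k}^G x)_i$, since the block set is the disjoint union of the selected orbits. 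Therefore $D$ is a $t$-$(r,k,\lambda)_Q$ design — i.e. every $t$-subspace of $\PS$ lies in exactly $\lambda$ blocks — if and only if $(A_{t,k}^G x)_i = \lambda$ for every $i \in \{1,\ldots,a\}$, which is precisely the system~\eqref{eq:KM}. This establishes both implications simultaneously: a $G$-invariant design yields such an $x$, and such an $x$ yields a $G$-invariant design.

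The only point requiring a word of care is that every $G$-invariant design arises this way, i.e. that the block set really is a union of full $G$-orbits; this is immediate from the definition of invariance ($g(D) = D$ for all $g \in G$ forces $D$ to be closed under the $G$-action, hence a union of orbits) and the fact that designs in the sense of the present paper are simple (no repeated blocks), so there is no multiplicity bookkeeping. I do not expect any genuine obstacle here: once the representative-independence of $a_{i,j}$ is observed, the proof is a verbatim transcription of the classical argument of Kramer and Mesner, the polar-space structure entering only through the fact that $\Aut(\PS)$ acts on $\qbinom{\PS}{k}{Q}$ and $\qbinom{\PS}{t}{Q}$ preserving the containment relation.
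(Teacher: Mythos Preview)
Your argument is correct and is exactly the standard Kramer--Mesner orbit-counting proof, carried over verbatim to the polar-space setting. The paper does not supply its own proof of this theorem; it simply states it with a citation to Kramer and Mesner, remarking that the classical result ``can be applied to designs in polar spaces as well.'' Your proposal spells out precisely why that transfer is valid (the collineation group preserves incidence, so the matrix entries are well-defined on orbits), which is what a reader would reconstruct from the citation.
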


The question now is which are the promising groups that can be prescribed in Theorem \ref{thm:KramerMesner}?
For combinatorial designs, cyclic groups have turned out to be promising candidates for prescribed groups in
Theorem~\ref{thm:KramerMesner}. For subspace designs, the Singer cycle and its normalizer, and subgroups thereof are good
candidates \cite{qdesignscomputer2017}. For designs in polar spaces, it is not yet clear if there are
exceptionally promising groups to be used for Theorem~\ref{thm:KramerMesner}.

With these previous experiences in mind, we constructed all cyclic subgroups of the automorphism group
of each polar space together with its normalizer using MAGMA \cite{magma}. The cyclic groups and random subgroups of their normalizers
were prescribed as automorphism groups in Theorem~\ref{thm:KramerMesner}.

The systems of Diophantine linear equations (\ref{eq:KM}) were attempted to be solved by the third author's software
\texttt{solvediophant} \cite{wassermann98,Wassermann-2021}. The parameters for which designs have been found
are listed in Section~\ref{sec:results}.
In small cases, exhaustive non-existence results could be achieved.
So far, no designs could be found in the Hermitian polar spaces. However, this seems rather to be due to
the huge size of the search space than due to the scarcity of designs.

\subsection{Results}\label{sec:results}
Table \ref{tab:designs} lists the parameters of the constructed designs.
An asterisk indicates that a design with these parameters can be constructed from a Latin-Greek halving,
after possibly applying one or several repeated transitions to the derived and/or reduced design.
An italic number indicates that the parameters of that design have not been found by the computer search
but can be constructed via
Theorem~\ref{thm:superresidual} from another known design (including the newly constructed designs).
For all listed parameters the realizability result is new, except the ones marked by an asterisk,
the designs in $\Omega^-(8, 2)$ and $\Omega(7, 3)$ found by De Bruyn and Vanhove \cite{DeBruyn2021},
and their descendants in $\Omega(9,2)$, $\Omega^+(10,2)$ and $\Omega^+(8,3)$ in terms of
Theorem~\ref{thm:superresidual}.
We have published the constructed designs in the open-source research data repository \cite{Kiermaier-Wassermann-2024-Zenodo:13353388}.

\begin{table}[!htbp]
\caption{List of known realizable $2$-$(r,k,\lambda)_Q$ designs for $q=2,3$}\label{tab:designs}
\begin{center}
    \small
    \begin{tabular}{llllrlp{7cm}}
        \hline
        \multicolumn{7}{c}{$\mathbf \QQm$}                                                                             \\\hline
        $q$ & $r$ & $k$         & $\Deltalambda$ & $\lambda_{\max}$ & $\nexists \lambda$ & $\exists\lambda$                  \\ \hline
        $2$ &
        $3$ & $3$         & $1$            & $5$              & 1                  & 2  \quad(De Bruyn, Vanhove)            \\
        & $4$ & $\mathbf 3$ & $3$            & $27$             &                    & 6, 9, 12                          \\
        & $4$ & $4$         & $1$            & $45$             & 1                  & 9, 11, 12, 14, 15, 16, 18, 19, 21 \\
        & $5$ & $5$         & $1$            & $765$            & 1                  & 240, 245, 275, 280, 315, 360      \\
        \hline
        $3$ &
        $3$ & $3$         & $1$            & $10$             & 1                  & 2, 5                              \\[2ex]
        \hline
        \multicolumn{7}{c}{$\mathbf \QQ$}                                                                              \\\hline
        $q$ & $r$ & $k$         & $\Deltalambda$ & $\lambda_{\max}$ & $\nexists \lambda$ & $\exists\lambda$                  \\ \hline
        $2$ &
        $3$ & $3$         & $1$            & $3$              & $1$                & -                                 \\
        & $4$ & $\mathbf 3$ & $1$            & $15$             &                    & 6, 7                              \\
        & $4$ & $4$         & $1$            & $15$             & $1$                & 5, 6, 7                           \\
        & $5$ & $5$         & $1$            & $135$            & $1$                & 21, 24, 27,
        29, 30, 32, 33, 35, 36, 39, 40, 42, 45, 47, 48,
        50, 51, 52, 53, 54, 55, 56, 57, 58, 60, 61, 62, 63, 64, 65, 66                                                 \\
        & $6$ & $6$         & $1$            & $2295$           & $1$                & \emph{720},
        \emph{735}, \emph{825}, \emph{840}, \emph{945}, \emph{1080}                                                    \\
        \hline
        $3$ &
        $3$ & $3$         & $1$            & $4$              & 1                  & 2 \quad(De Bruyn, Vanhove)             \\
        & $4$ & $4$         & $1$            & $40$             & 1                  & 8, 20                             \\[2ex]
        \hline
        \multicolumn{7}{c}{$\mathbf \QQp$}                                                                             \\\hline
        $q$ & $r$ & $k$         & $\Deltalambda$ & $\lambda_{\max}$ & $\nexists \lambda$ & $\exists\lambda$                  \\ \hline
        $2$ &
        $3$ & $3$         & $1$            & $2$              & -                  & $1^*$                             \\
        & $4$ & $\mathbf 3$ & $3$            & $9$              &                    & $3$                               \\
        & $4$ & $4$         & $1$            & $6$              & 1,2                & $3^*$                             \\
        & $5$ & $5$         & $1$            & $30$             & 1                  & 6, 8, 10, 12, 14, $15^*$          \\
        & $6$ & $6$         & $1$            & $270$            & 1                  & 32, 40,
        \emph{42}, 45,
        48, 50, 51, 52, 53, 54, 56, 57,
        58, 60, 62, 63, 64, 65, 66, 67, 69, 70, 72, 74, 75, 77, 78, 79,
        80, 81, 84, 85, 86, 87, 88, 90, 91, 92, 93, 94, 95, 96, 98, 99,
        100, 102, 103, 104, 105, \emph{106}, 107, 108, 109, 110, 111, 112, 114, 115, 116, 117, 118,
        119, 120, 121, 122, 123, 124, 125, 126, 127, 128, 129, 130, 132, 133, 134, $135^*$                             \\
        & $7$ & $7$         & $1$            & $4590$           & $1$                & \emph{1440},
        \emph{1470}, \emph{1650}, \emph{1680}, \emph{1890}, \emph{2160}                                                \\
        \hline
        $3$ &
        $3$ & $3$         & $1$            & $2$              & -                  & $1^*$                             \\
        & $4$ & $4$         & $1$            & $8$              & 1                  & $4^*$                             \\
        & $5$ & $5$         & $1$            & $80$             & 1                  & 8, 16, 32, $40^*$                 \\[2ex]
        \hline
        \multicolumn{7}{c}{\textbf{$\QWb$}}                                                                            \\ \hline
        $q$ & $r$ & $k$         & $\Deltalambda$ & $\lambda_{\max}$ & $\nexists \lambda$ & $\exists\lambda$                  \\ \hline
        $2$ &     &             &                &                  &                    & see $\Omega(2r+1,2)$ \\ \hline
        $3$ &
        $3$ & $3$         & $1$            & $4$              & 1, 2               & -  \quad(2 by De Bruyn, Vanhove)       \\
        & $4$ & $4$         & $1$            & $40$             & 1                  & 20                                \\
    \end{tabular}
\end{center}
\end{table}

\section{Open questions and future work}\label{sec:outlook}

We would like to conclude this article by pointing out some open questions and suggestions for future work.

In the theory of combinatorial and subspace designs, the numbers $\lambda_{i,j}$ of a $t$-design $D$ play an important role.
They can be defined in two variants (which coincide in the case of combinatorial designs).
Let $i,j$ be non-negative integers with $i+j \leq t$.
For subspaces $I,J$ with $\dim(I) = i$ and $\dim(J) = j$, the number of blocks $B \in D$ containing $I$ and having trivial intersection with $J$ only depends on $i$ and $j$ and yields the first variant of $\lambda_{i,j}$.
Similarly, for subspaces $I$, $J$ with $\dim(I) = i$ and $\codim(J) = j$, the number of blocks $B \in D$ with $I \subseteq B \subseteq J$ only depends on $i$ and $j$, which gives the second variant.
It might be interesting to investigate this concept for designs in polar spaces.

For combinatorial and subspace designs, a \emph{large set} is a partition of the set of all $k$-subspaces of the ambient space into $N$ $t$-designs of the same size.
Large sets generalize the notion of a halving (and thus of hemisystems in polar spaces), which are given by the case $N = 2$.
An important application of large sets of combinatorial and subspace designs is the construction of infinite series of designs, see \cite{Khosrovshahi-AjoodaniNamini-1991-JCTSA58[1]:26-34,Braun-Kiermaier-Kohnert-Laue-2017-JCTSA147:155-185}.
Maybe this strategy can also be applied to large sets of designs in polar spaces.

One could further investigate symmetric $2$-$(4,4,q+1)_{\Omega^+(q)}$ designs, which are the only parameters where the symmetric designs in polar spaces have not been fully classified in Section~\ref{sec:fisher}.
Moreover, we would like to ask if inequality~\eqref{eq:fisher_ineq} allows any designs in $U^+(q)$, and if it allows any non-complete and non-Latin-Greek designs in $\Omega^+(q)$.

For $q$ odd, it is an open question whether a spread in $\QQm$, $r\ge 3$ exists \cite[Sec.~7.5]{Hirschfeld-Thas}.
In our notation, these are the designs with the parameters $1$-$(r, k, 1)_{\Omega^-(q)}$.
By computer search, we could show that if in $\Omega^-(8, 3)$ such a spread exists,
the only prime powers that may divide the order of an automorphism are $2$ or $3$.

An $m$-dimensional Kerdock set over $\mathbb{F}_q$ with even $m$ is a set of $q^{m-1}$ skew-symmetric
$(m\times m)$-matrices such that the difference of any two of them is non-singular.
Its existence is equivalent to the existence of a $1$-$(m,m,1)_{\Omega^+(q)}$ spread.
It has been shown that for even $q$, the Kerdock sets do exist for all $m$.
For odd $q$, the existence for $m \geq 6$ is open.
We could show by a computer search that if a $1$-$(6,6,1)_{\Omega^+(3)}$ exists, its automorphism group is of order $1,2,3,4,5,6$ or $8$.

\section*{Acknowledgement}
The authors want to thank the anonymous referees for helpful comments.

Further, the authors gratefully acknowledge the Leibniz Supercomputing Centre for funding this project by providing
computing time on its Linux-Cluster.

The collaboration on this topic started with a discussion at the Oberwolfach Workshop 1912: Contemporary Coding Theory in March 2019.
The work on this article was almost finished, when our coauthor and friend, Kai, tragically passed away in August 2023.
We greatly miss him, not least for his sharp mind and charming personality.

\bibliographystyle{plain}
\bibliography{polar}

\end{document}